\newcommand{\R}{\mathbb{R}}
\newcommand{\N}{\mathbb{N}}
\newcommand{\pa}{\partial}
\newcommand{\ve}{\varepsilon}
\newcommand{\vp}{\varphi}
\newcommand{\md}{\mathrm{d}}
\newcommand{\vpa}{\varphi_\ast}
\newcommand{\T}{\mathbb{T}^1}
\newcommand{\TAin}{\mathbb{T}^{AL}_{\to\vp}}
\newcommand{\TAout}{\mathbb{T}^{AL}_{\vp\to}}
\newcommand{\TAinp}{\mathbb{T}^{AL}_{\to\vp'}}
\newcommand{\TAoutp}{\mathbb{T}^{AL}_{\vp'\to}}
\newcommand{\TRout}{\mathbb{T}^{REV}_{\vp}}
\newcommand{\lone}{L^1(\T)}
\newcommand{\h}{\mathcal{H}}
\newcommand{\supp}{\operatorname{supp}}
\newtheorem{theorem}{Theorem}
\newtheorem{lemma}[theorem]{Lemma}
\newtheorem{rem}{Remark}
\tikzset{>=stealth}
\begin{document}
	
\title{Kinetic Modelling of Colonies of Myxobacteria}
\author{S. Hittmeir\thanks{University of Vienna, Faculty for Mathematics, Oskar-Morgenstern-Platz 1, 1090 Wien, Austria.  
{\tt sabine.hittmeir@univie.ac.at}} \and 
L. Kanzler\thanks{University of Vienna, Faculty for Mathematics, Oskar-Morgenstern-Platz 1, 1090 Wien, Austria. 
{\tt laura.kanzler@univie.ac.at}} \and 
A. Manhart\thanks{University College London, Dept. of Mathematics, 25 Gordon Street, WC1H 0AY London, UK.
{\tt a.manhart@imperial.ac.uk}} \and 
C. Schmeiser\thanks{University of Vienna, Faculty for Mathematics, Oskar-Morgenstern-Platz 1, 1090 Wien, Austria. 
{\tt christian.schmeiser@univie.ac.at}}}
	\date{\vspace{-5ex}}
	\maketitle
	
\begin{abstract}
A new kinetic model for the dynamics of \textit{myxobacteria} colonies on flat surfaces is derived formally, and first 
analytical and numerical results are presented. The model is based on the assumption of hard binary collisions of two 
different types: alignment and reversal. We investigate two different versions: a) realistic rod-shaped bacteria and b) artificial
circular shaped bacteria called \textit{Maxwellian myxos} in reference to the similar simplification of the gas dynamics 
Boltzmann equation for Maxwellian molecules. The sum of the corresponding collision operators produces relaxation 
towards nematically aligned equilibria, i.e. two groups of bacteria polarized in opposite directions. 

For the spatially homogeneous model a global existence and uniqueness result is proved as well as exponential decay
to equilibrium for special initial conditions and for Maxwellian myxos. Only partial results are available for the rod-shaped
case. These results are illustrated by numerical simulations, and a formal discussion of the macroscopic limit is
presented.

\end{abstract}

\begin{keywords}
Myxobacteria, binary collisions, decay to equilibrium.
\end{keywords}
	
	
\section{Introduction}

The goal of this work is the derivation of a new model for the dynamics of myxobacteria colonies on flat substrates, 
as well as first steps in its analysis. The model is a kinetic transport equation for the distribution function $f(x,\vp,t)$,
$x\in\R^2$, $\vp\in\T$, $t\ge 0$, and has the form
\begin{equation}\label{model}
  \pa_t f + \omega(\vp) \cdot \nabla_x f = 2\int_{\TAin}  b(\tilde\vp,\vpa) \tilde f f_* d\vpa  
  + \int_{\TRout} b(\vp^\downarrow,\vpa^\downarrow) f^\downarrow f_*^\downarrow d\vpa - \int_{\T} b(\vp,\vpa) f f_* 
	d\vpa \,,
\end{equation}
where $\omega(\vp)=(\cos\vp,\sin\vp)$, $\T$ denotes the one-dimensional flat torus of length $2\pi$. For given $\vp$ the integration intervals
in the gain terms are given by
$$
  \TRout = \left(\vp+\frac{\pi}{2},\vp+\frac{3\pi}{2}\right) \,,\qquad \TAin = \left( \vp-\frac{\pi}{4}, \vp+\frac{\pi}{4}\right) \,,
$$ 
and the precollisional directions are defined by
$$
  \tilde\vp = 2\vp-\vpa \,,\qquad \vp^\downarrow = \vp+\pi \,,\qquad \vpa^\downarrow = \vpa+\pi\,.
$$
The model describes motion along straight lines with fixed speed in direction $\vp$, interrupted by hard binary collisions with collision
cross-section $b(\vp,\vpa)$, which quantifies the collision frequency and depends on the shape of the bacteria. As usual, sub- and super-scripts on $f$ indicate
evaluation at $\vp$ with the same sub- and super-scripts. The two different gain terms describe two different 
types of collisions:
\begin{itemize}
\item \textit{Alignment:} $(\tilde\vp,\vpa)\to(\vp,\vp)$ with $\vp = (\tilde\vp+\vpa)/2$, if two myxobacteria moving in directions $\tilde{\vp}$ and $\vpa$ meet at an angle smaller than $\pi/2$. The factor 2 is due to the fact that an alignment collision produces 2 myxobacteria with the same
direction. The set $\TAin$ describes all angles $\vpa$, which can produce the angle $\vp$ upon collision.
\item \textit{Reversal:} $(\vp,\vpa)\to(\vp^\downarrow,\vpa^\downarrow)$, if two myxobacteria with directions $\vp$ and $\vpa$ meet at an angle larger than $\pi/2$. The set $\TRout$ describes all angles $\vpa$ such that a collision involving $\vp$ can produce the angle $\vp^\downarrow$.
\end{itemize}

Myxobacteria are rod-shaped bacteria that live in cultivated soil. They feed on living and dead decaying material including bacteria and eukaryotic microbes, which makes them play an important role as scavengers cleaning up biological detritus in the environment. They have an interesting life cycle, similar to certain amoebae, called cellular slime molds (with \textit{Dictyostelium discoideum} as the best known example). During their vegetative phase they move as predatory swarms searching and killing prey collectively, while under starvation conditions they aggregate and form fruiting bodies, which produce spores that are more likely to survive until nutrients are more plentiful again. 
	
Myxobacteria are able to move on flat surfaces by \textit{gliding} \cite{mauriello}, leaving a slime--trail behind them. The physical mechanism as well as the genetic basis are still partly a puzzle to microbiologists and have already challenged them for several decades \cite{hodgkin,nan,wall,wolgemuth}. 
	
Moving on solid surfaces, bacteria form organized mono- or multi-layered groups called \textit{swarms}. During the swarming process \textit{rippling} is observed, i.e., macroscopic patterns due to propagating waves of aligned bacteria. The formation of these waves can be seen during collective hunting as well as in the aggregation phase \cite{igoshin}. 
From a macroscopic point of view colliding waves seem to travel unaffectedly through each other, while 
tracking of individual bacteria \cite{sager,welch} has revealed that most cells reverse their direction in the collision process
preserving, however, a nematic alignment order, i.e. locally myxobacteria are oriented and move in the same or in
opposite directions.

Pattern formation requires signaling between cells. The signaling mechanism most important for aggregation and rippling
is called \textit{C-signaling} \cite{sager}. It relies on the \textit{C-factor,} a protein bound to the cell surface and interchanged between individuals. It has been observed that direct cell-cell contact is necessary for C-signaling \cite{jelsbak,kim}. 

The dynamics of myxobacteria has been one of the motivations to formulate kinetic theories for interacting 
self-propelled rods \cite{baskaran,bertin}. In \cite{degond} a kinetic model has been formulated, which produces relaxation to nematically aligned states. The model is of mean field type, i.e., cell-cell signaling is modeled
as a nonlocal process. Simulations with the macroscopic limit do not produce the rippling phenomenon. It turns out to
be necessary to include a waiting time between reversals \cite{degond3,igoshin3,igoshin}.

The model \eqref{model} is based on local interactions to take into account the experimental evidence on C-signaling.
As a consequence it is structurally similar to the \textit{Boltzmann equation} of gas dynamics \cite{boltzmann,Cercignani}.
In the following section we present a formal derivation from a stochastic many-particle model, following the lines of 
\cite{Cercignani} (see \cite{carlen} for a rigorous derivation of a similar spatially homogeneous equation). The derivation is facilitated by an approximate version of the alignment collisions, with slightly different
post-collisional directions, allowing inversion of the collisional rules. The final step is the removal of the approximation. 
The model with approximate alignment collisions has similarities with the \textit{dissipative Boltzmann equation} for granular gases \cite{toscani}, whereas after removal of the approximation it corresponds to the extreme case of \textit{sticky particles}. A model with approximate alignment, motivated by
microtubule dynamics, has already been formulated in \cite{aranson}, and the sticky particles case, regularized by diffusion in the angular direction,
has been analyzed in \cite{ben}.

The theory for the dissipative Boltzmann equation is much less developed than for its conservative counterpart, mainly
because of the lack of an entropy estimate. Global existence results are only known for small data (see, e.g., \cite{alonso}, \cite{tristani})
or in the one-dimensional situation, where grazing collisions are almost elastic \cite{pulvirenti}.
The rigorous macroscopic limit towards pressureless gas dynamics has been carried out in the one-dimensional case
\cite{jabin}.

In Section 3 formal properties of the collision operator are collected, by separately considering the reversal and the 
alignment collisions. It is shown that the set of equilibria is three-dimensional, whereas in general there are only two 
independent collision invariants, which does not allow to identify equilibria uniquely from initial data. A remedy is to make
assumptions on the support of the initial data, such that the bacteria are split into two groups with alignment collisions only
within the groups and reversal collisions only between members of different groups. In this case the sizes of the groups are
invariant, which provides the missing third collision invariant.
	
Section 4 is dedicated to the spatially homogeneous case which, for the inelastic Boltzmann equation, is much better understood than the spatially inhomogeneous case, see for example \cite{bobylev2,mischler,mischler2}. In our case a global existence and uniqueness result in $L^1$ for the spatially 
homogeneous equation is proved. By the boundedness of the collision cross-section the proof is rather straightforward. A possible extension to 
measure solutions as in \cite{alonso2} does not seem feasible because of the jumps from alignment to reversal collisions. Convergence to equilibrium is only 
considered for special initial data as described above, since only in this case we are able to identify the equilibrium in terms of the initial data. It is shown that a variance type functional, which can be interpreted as the Wasserstein-2 distance from the equilibrium, is dissipated as an effect of the 
alignment collisions. However, the dissipation is not definite, since the convergence to equilibrium also requires the reversal collisions. A full decay result to equilibrium is only obtained for circular myxobacteria, termed \textit{Maxwellian
myxos,} since in this case the collision cross-section is constant. Under this assumption a second decaying functional can 
be combined with the first, providing exponential decay to equilibrium with respect to the Wasserstein-2 metric, a result similar to \cite{degond2} (see also \cite{bobylev2} for the long time behavior of the inelastic Boltzmann equation for Maxwellian molecules). For rod shaped myxobacteria convergence could
only be shown for an even smaller set of initial conditions supported in an interval of length $\pi/2$ such that only alignment collisions occur. In this case
convergence cannot be expected to be exponential, since the collision cross-section degenerates close to
equilibrium. An algebraic decay estimate is shown as in \textit{Haff's law} \cite{haff} for the dissipative 
Boltzmann equation. Haff's law has been proved rigorously in the 3-dimensional homogeneous case for constant restitution in \cite{mischler2}. Further results for the one dimensional dissipative Boltzmann equation can be found in \cite{alonso2} as well as for viscoelastic hard-spheres in \cite{alonso3}. In these
works it has been shown that the algebraic decay rates are sharp by methods, which do not seem to be applicable in our situation.

In Section 5 numerical simulations of the spatially homogeneous model are presented, illustrating the results of Section 4
as well as the conjecture that they remain valid without special assumptions on initial data and bacteria shape.
Finally, a short discussion of the formal macroscopic limit of \eqref{model} is presented in Section 6. {\color{violet} } In a special case
the structure of the macroscopic equations is that of pressureless gas dynamics as for the dissipative Boltzmann equation \cite{jabin}. 
A more regular macroscopic limit including a temperature equation has been formally derived in \cite{bobylev1} under the assumption of weak
inelasticity.
	
\section{Model derivation}
	
\paragraph{The individual based model:} 	
We consider $N$ identical bacteria moving in $\R^2$. Each of them is idealized as a rod of thickness zero and of length $l$,
represented by the parametrization $B_i = \left\{x_i + \alpha\,\omega_i:\, -l/2 \le \alpha\le l/2\right\}$ with center $x_i\in\R^2$, direction 
$\omega_i = \omega(\vp_i) = (\cos\vp_i, \sin\vp_i)$, and direction angle $\vp_i \in \T$, $i=1,\ldots,N$. 
As usual in kinetic theory, sub- and superscripts on functions of the direction angle indicate evaluation at $\vp$ with the
same sub- and superscripts.
Between interactions, bacterium number $i$ is gliding with constant speed $s_0$ in its longitudinal direction $\omega_i$, i.e.
its velocity is given by $v_i = s_0\,\omega_i$.

The state space is given by $\Gamma_N \subset (\R^2 \times \T)^N$, defined such that the bacteria do not overlap:
$$
  \Gamma_N := \left\{ (x_1,\vp_1,\ldots,x_N,\vp_N):\, (x_i,\vp_i,x_j,\vp_j)\in\Gamma_2\quad\forall (i,j)\right\} \,,
$$ 
with 
$$
  \Gamma_2 := \left\{ (x,\vp,x_*,\vp_*):\, \max\left\{ \left| \alpha\right|, \left| \alpha_*\right|\right\} > \frac{l}{2}\,,\quad \mbox{for }
  \alpha = \frac{(x_*-x)\cdot \omega_*^\bot}{\omega\cdot\omega_*^\bot} \,,\,
  \alpha_* = \frac{(x-x_*)\cdot \omega^\bot}{\omega_*\cdot\omega^\bot}\right\} \,,
$$ 
with $\omega = \omega(\vp)$, $\omega_* = \omega(\vp_*)$, $(a_1,a_2)^\bot = (-a_2,a_1)$. Note that $\alpha$ and 
$\alpha_*$ are determined such that $x+\alpha\omega = x_*+\alpha_*\omega_*$.

The collision rules are derived from the biological observations mentioned above. We assume that collisions between two bacteria 
$B$ and $B_*$ with pre-collisional states $(x,\vp)$ and, respectively, $(x_*,\vp_*)$ are instantaneous and can either lead to
	\begin{itemize}
		\item \textbf{Alignment}, if $\omega \cdot \omega_*>0$ (collision with pre-collisional angles less than $\pi/2$ apart), or to
		\item \textbf{Reversal} of both bacteria, if $\omega \cdot \omega_*<0$ (collision with pre-collisional angles greater than $\pi/2$ apart).
	\end{itemize}
Only \textit{binary collisions} are considered. As usual in kinetic theory, collisions between three or more bacteria at the same time are much less likely than binary collisions and are  therefore neglected.
By the same argument we neglect the limiting case $\omega\cdot\omega_* = 0$. 

For a precise formulation of the \textit{collision rules} we introduce the set of \textit{pre-collisional} states,
\begin{align*}
  \partial\Gamma_2^{out} := \Biggl\{ &(x,\vp,x_*,\vp_*)\in \partial\Gamma_2:\, \exists\,\alpha\in \left[-\frac{l}{2},\frac{l}{2}\right] :\, 
  x+\alpha\omega = x_* + \frac{l}{2}\omega_* \quad\mbox{or } \\
  & \exists\,\alpha_*\in \left[-\frac{l}{2},\frac{l}{2}\right] :\, x+\frac{l}{2}\omega = x_* + \alpha_*\omega_*\Biggr\}\,,
\end{align*}
and of \textit{post-collisional} states, $\partial\Gamma_2^{in} := \partial\Gamma_2 \setminus \partial\Gamma_2^{out}$, of a pair of bacteria.\smallskip

\noindent\textit{Alignment} between $(x,\vp)$ and $(x_*,\vp_*)$ happens, if $(x,\vp,x_*,\vp_*)\in \partial\Gamma_2^{out}$ and
   $$
     \vp_* \in \TAout := \{\psi\in\T:\, \omega(\vp)\cdot\omega(\psi)>0\} = \left(\vp-\frac{\pi}{2},\vp+\frac{\pi}{2}\right) \,.
   $$
   The alignment collision rule is (see Fig. \ref{fig:coll} (a)): \\
   $$(x,\vp),\, (x_*,\vp_*) \quad\rightarrow\quad (x',\vp'),\,(x',\vp') \qquad\mbox{with } x' = \frac{x+x_*}{2},\, \vp' = \frac{\vp+\vp_*}{2}.$$ \\

Note that the representation of $\TAout$ as an interval around $\vp$ is necessary for the above formula for the 
post-collisional angle $\vp'$ to provide the direction $\omega(\vp')$ lying between the pre-collisional directions $\omega(\vp)$ and 
$\omega(\vp_*)$.\smallskip

\noindent\textit{Reversal} between $(x,\vp)$ and $(x_*,\vp_*)$ happens, if $(x,\vp,x_*,\vp_*)\in \partial\Gamma_2^{out}$ and
   $$
     \vp_* \in \TRout := \{\psi\in\T:\, \omega(\vp)\cdot\omega(\psi)<0\}  \,.
   $$
   The reversal collision rule is (see Fig. \ref{fig:coll} (b)): \\		
   $$(x,\vp),\,(x_*,\vp_*) \quad\rightarrow\quad (x,\vp+\pi),\,(x_*,\vp_*+\pi).$$\\

\begin{figure}[h]
	\centering
	\begin{subfigure}{0.4\textwidth} 
		\includegraphics[width=\textwidth]{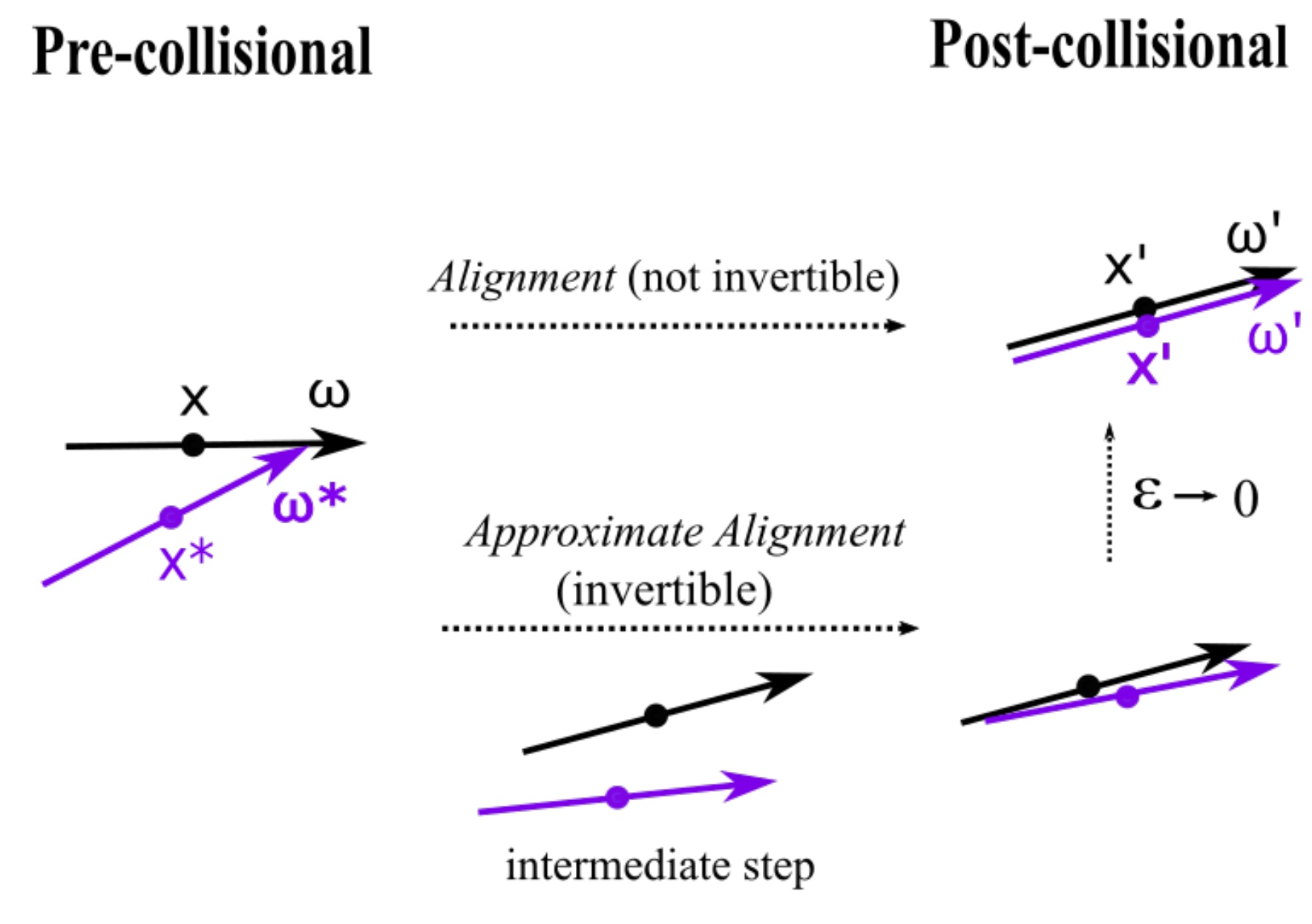}
		\caption{Alignment collisions} 
	\end{subfigure}
	\hspace{2em} 
	\begin{subfigure}{0.4\textwidth} 
		\includegraphics[width=\textwidth]{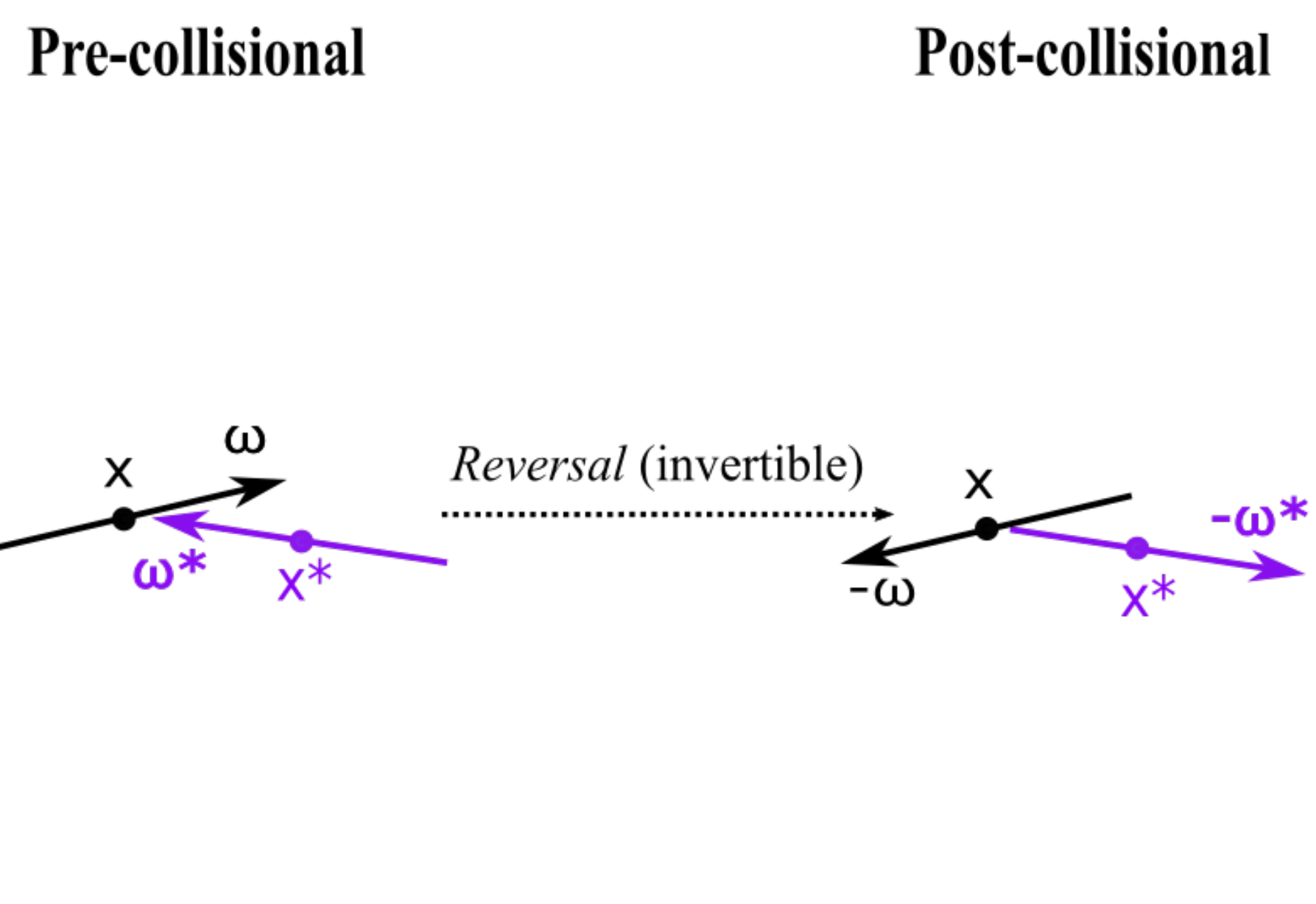}
		\caption{Reversal collisions} 
	\end{subfigure}
	\caption{Graphic illustration of the collision rules. (a): \textit{Alignment collisions} with two-step geometric algorithm to regularize it. (b): Already invertible \textit{reversal collisions}.}
	\label{fig:coll}
\end{figure}

\paragraph{Regularization of the alignment collisions:} In both types of collisions the pair of post-collisional states is in 
$\partial\Gamma_2^{in}$. After an alignment event its state space
velocity does, however, not point into the interior of $\Gamma_2$. Also the collision rule for alignment is obviously not invertible.
Since we intend to formulate a kinetic model following the standard derivation of the \textit{Boltzmann equation for hard spheres} \cite{boltzmann,Cercignani}, where the inverse of the collision rule is used, we shall introduce a regularization of the alignment
collisions, which will be removed again after the derivation.
The \textit{post-collisional} angles are reformulated such that the bacteria drift slightly apart after the collision: \smallskip
	
\noindent\textit{Regularized alignment:} Let $(x,\vp,x_*,\vp_*)\in \partial\Gamma_2^{out}$ with
$\vp_*\in \TAout$. With a small parameter $\ve>0$, the regularized collision rule is given by \quad
$(x,\vp),(x_*,\vp_*) \quad\rightarrow\quad (x',\vp'),(x_*',\vp_*')$, \\ with the rule
\begin{equation}\label{phi-AL}
\vp' = \frac{1-\ve}{2}\vp + \frac{1+\ve}{2}\vp_* \,,\qquad 
\vp_*' = \frac{1+\ve}{2}\vp + \frac{1-\ve}{2}\vp_* \,,
\end{equation}
for the angles.
The post-collisional centers are determined such that the post-collisional states are in $\partial\Gamma_2^{in}$, according to the 
following algorithm: First the bacteria are turned to the post-collisional directions around the pre-collisional centers, and then the 
centers are shifted towards each other, until the trailing end of one of them touches the other (see Fig. \ref{fig:coll} (a)). 


This leads to 
\begin{equation}\label{x-AL}
  x' = \frac{1+\ve A}{2} x + \frac{1-\ve A}{2} x_* \,,\qquad  x_*' = \frac{1-\ve A}{2} x + \frac{1+\ve A}{2} x_*\,,
\end{equation}
with $0<A = O(1)$ as $\ve\to 0$, depending on the pre-collisional state. There are two different versions for the formula for $A$ covering the cases where the pre-collisional leading end of $B$ is touching $B_*$ (and the post-collisional trailing end of $B_*$ is touching $B$) or vice versa.

The inversion of the collision rule is easy for the angles:
$$
  \vp = \frac{1+\ve}{2\ve}\vp_*' - \frac{1-\ve}{2\ve}\vp'\,,\qquad 
  \vp_* = \frac{1+\ve}{2\ve}\vp' - \frac{1-\ve}{2\ve}\vp_*'\,.
$$
For the cell centers it can be described by a geometric algorithm as above: First the bacteria are turned around their post-collisional
centers to the pre-collisional directions given above, leading to a forbidden state, where they cross each other. Then the centers are shifted apart until the leading end of one of them touches the other.

\paragraph{Probabilistic description:}
To derive the kinetic equation, we now reformulate the problem in terms of a probability density $P(\cdot,t)$ on $\Gamma_N$ at time $t\ge 0$. We assume \textit{indistinguishability} of the
bacteria, i.e. $P$ is invariant under permutations of the labels of the bacteria. It satisfies the \textit{Liouville equation}
	\begin{equation}\label{lio}
	\pa_t P+\sum_{i=1}^N v_i \cdot \nabla_{x_i} P = 0 \,,
	\end{equation}
where $v_i = s_0 \omega_i$, subject to boundary conditions, which are determined by the collision rules:
\begin{align}
	P(\dots, x',\vp', \dots, x_*', \vpa', \dots,t)&=F_{in}P(\dots, x, \vp, \dots, x_*, \vpa, \dots,t) \label{in1}\\
	&\text{for }\quad (x,\vp,x_*,\vp_*) \in \partial\Gamma_2^{out}, \quad  \omega\cdot\omega_*>0\,, \notag\\
	P(\dots, x,\vp^\downarrow, \dots, x_*, \vpa^\downarrow, \dots,t)&=P(\dots, x,\vp, \dots, x_*, \vpa, \dots,t) \label{in2}\\
	&\text{for}\quad (x,\vp,x_*,\vp_*) \in \partial\Gamma_2^{out}, \quad \omega\cdot\omega_*<0 \,, \notag
\end{align}
where $\vp^\downarrow = \vp+\pi$, $\vpa^\downarrow=\vpa+\pi$, and the relations between $(x,\vp,x_*,\vp_*)$ and 
$(x',\vp',x_*',\vp_*')$ in \eqref{in1}  are given by \eqref{phi-AL} and \eqref{x-AL}. The factor $F_{in}$ in \eqref{in1} is 
determined such that 
$$
  P(\dots, x',\vp', \dots, x_*', \vpa', \dots,t)|{v_*}'\cdot{\omega'}^\bot | d\sigma'
  = P(\dots, x, \vp, \dots, x_*, \vpa, \dots,t)|v_*\cdot\omega^\bot| d\sigma \,,
$$
where $d\sigma$ and $d\sigma'$ are the 5-dimensional surface measures on $\partial\Gamma_2^{out}$ and, respectively, 
$\partial\Gamma_2^{in}$. This guarantees particle conservation. No such factor is needed in \eqref{in2} since the reversal
collisions preserve the surface area as well as the normal component $|v_*\cdot\omega^\bot|$ of the flux.

We shall need a formula for $F_{in}$ for the situation, where the leading end of bacterium $B$ hits bacterium $B_*$ in
an alignment collision.
The corresponding part of $\partial\Gamma_2^{out}$ can be parametrized by $(x,\vp,\vp_*,\alpha)$ with
$$
  x_* = x + \frac{\ell}{2}\omega - \alpha\omega_* \,.
$$
Similarly, the parameters along the corresponding part of $\partial\Gamma_2^{in}$ can be taken as $(x',\vp',\vp_*',\alpha')$ 
with
$$
  x_*' = x' + \alpha'\omega' + \frac{\ell}{2}\omega_*' \,.
$$
A straightforward computation then gives 
$$
  F_{in} = \frac{|\omega_* \cdot \omega^{\perp}|}{\ve |\pa_{\alpha} \alpha'| |\omega_*' \cdot \omega'^{\perp}|} 
  \mathbb{1}_{\omega\cdot\omega_*>0}\,.
$$
Since $|\vp'-\vp_*'| = \ve |\vp-\vp_*|$, the inflow data vanish, whenever $\ve\pi/2<|\vp'-\vp_*'|<\pi/2$.

The $k$-bacteria marginals ($1\le k\le N$) of the distribution will be denoted by 
$$
    P_k(x_1,\vp_1,\ldots,x_k,\vp_k,t) := \int_{\Gamma_{N-k}^N(x_1,\vp_1,\ldots,x_k,\vp_k)} P(x_1,\vp_1,\ldots,x_N,\vp_N,t) 
    \prod_{j=k+1}^N dx_j d\vp_j \,,
$$
with 
$$
   \Gamma_{N-k}^N(x_1,\vp_1,\ldots,x_k,\vp_k) = \{(x_{k+1},\vp_{k+1},\ldots,x_N,\vp_N):\,
   (x_1,\vp_1,\ldots,x_N,\vp_N)\in\Gamma_N\}
$$
In order to obtain an evolution equation for the one-bacterium marginal $P_1(x,\vp,t)$, we integrate the Liouville equation (\ref{lio}) over $\Gamma_{N-1}^N(x,\vp)$, which gives
$$
  \pa_t P_1 + v \cdot \nabla_{x} P_1 
   + \sum_{j=2}^N   \int_{\Gamma_{N-1}^N(x,\vp)} v_j\cdot\nabla_{x_j} P \prod_{i=2}^N \; \md x_i \md \vp_i=0 \,.
$$
By the indistinguishability property, all the terms in the sum are identical, leading to
\begin{equation}\label{P1P2}
  \pa_t P_1 + v \cdot \nabla_{x} P_1 
   = - (N-1) \int_{\Gamma_1^2(x,\vp)} v_*\cdot \nabla_{x_*} P_2(x,\vp,x_*,\vp_*,t) dx_* d\vp_*  \,.
\end{equation}
An application of the divergence theorem gives an integration over 
$$
  (x_*,\vp_*)\in \partial\Gamma_1^2(x,\vp) \quad\Longleftrightarrow\quad (x,\vp,x_*,\vp_*) \in \partial\Gamma_2 \,,
$$
where the splitting 
\begin{eqnarray*}
  G-L &:=& (N-1)\int_{\T} \int_{-\ell/2}^{\ell/2} |v_*\cdot \omega^\bot| 
  P_2\left(x,\vp,x+\alpha\omega + \frac{\ell}{2}\omega_*,\vp_*\right) d\alpha\,d\vp_* \\
  && - (N-1)\int_{\T} \int_{-\ell/2}^{\ell/2} |v_*\cdot \omega^\bot| 
  P_2\left(x,\vp,x+\alpha\omega - \frac{\ell}{2}\omega_*,\vp_*\right) d\alpha\,d\vp_*
\end{eqnarray*}
of the right hand side of \eqref{P1P2}  into a \textit{gain term} and a \textit{loss term} corresponds to a
splitting into post-collisional states ($(x,\vp,x_*,\vp_*) \in \partial\Gamma_2^{in}$) and pre-collisional states 
($(x,\vp,x_*,\vp_*) \in \partial\Gamma_2^{out}$). Note that only those post-collisional states contribute, where the
trailing end of bacterium $B_*$ touches bacterium $B$, i.e. $x_*-\frac{\ell}{2}\omega_* = x+\alpha\omega$,
and only those pre-collisional states, where the leading end of $B_*$ touches $B$, i.e. 
$x_*+\frac{\ell}{2}\omega_* = x+\alpha\omega$.

The next step is to write the gain term in terms of pre-collisional states. In the part originating from reversal collisions
it is straightforward to use the boundary conditions \eqref{in2} to obtain
$$
   G_{REV}(x,\vp) = (N-1)s_0\int_{\TRout} \int_{-\ell/2}^{\ell/2} |\omega_*\cdot \omega^\bot| 
  P_2\left(x,\vp^\downarrow,x+\alpha\omega - \frac{\ell}{2}\omega_*,\vpa^\downarrow\right) d\alpha\,d\vp_* \,,
$$
where also the coordinate change $\alpha\to-\alpha$ has been carried out. For the alignment collisions a little more care
is necessary. For easier use of our earlier notation we write
\begin{eqnarray*}
   G_{AL,\ve}(x',\vp') &=& (N-1)s_0\int_{\TAoutp} \int_{-\ell/2}^{\ell/2} |\omega_*'\cdot {\omega'}^\bot| 
  P_2\left(x',\vp',x'+\alpha'\omega' + \frac{\ell}{2}\omega_*',\vp_*'\right) d\alpha'\,d\vp_*' \\
  &=& (N-1)s_0\int_{\TAoutp} \int_{-\ell/2}^{\ell/2} |\omega_*'\cdot {\omega'}^\bot| F_{in}
  P_2\left(x,\vp,x+\frac{\ell}{2}\omega - \alpha\omega_*,\vp_*\right) d\alpha'\,d\vp_*' \\
  &=& \frac{2(N-1)s_0}{1-\ve}\int_{\TAinp} \int_{-\ell/2}^{\ell/2} |\omega_*\cdot \omega^\bot|
  P_2\left(x,\vp,x+\frac{\ell}{2}\omega - \alpha\omega_*,\vp_*\right) d\alpha\,d\vp_* \,,
\end{eqnarray*}
where in the last line $\TAinp = \{\vp_*:\, |\vp'-\vp_*|\le (1-\ve)\pi/4\}$ denotes the set of all angles $\vp_*$ which, after an alignment collision with collision partner
$$
  \vp = \frac{2\vp' - (1+\ve)\vp_*}{1-\ve}
$$
(as a consequence of \eqref{phi-AL}) produce the post-collisional angle $\vp'$. Also $x$ can be expressed in terms of $x'$, $\vp'$, $\alpha$, and $\vp_*$, satisfying $x=x'+O(\ell)$ for small $\ell$. Note that this representation is sufficient for the Boltzmann-Grad limit, which we will perform next and where $\ell$ is assumed to be small compared to a reference length. The computations have involved the use of the boundary conditions \eqref{in1}
and the coordinate change $(\alpha',\vp_*') \to (\alpha,\vp_*)$, according to the rules for the regularized alignment collisions.

\paragraph{Scaling and Boltzmann-Grad limit: }
We choose as macroscopic length scale the total length of $N-1$ bacteria, $\mathcal{L}=(N-1)\ell$, and introduce the nondimensionalization
$$
   x \to \mathcal{L} x \,,\quad t \to \frac{\mathcal{L}}{s_0} t \,,\quad P_k \to \mathcal{L}^{-2k} P_k \,,\quad \alpha\to \ell\alpha \,,
$$
leading to the dimensionless version of the equation for the one-bacterium marginal:
$$
   \pa_t P_1 + \omega \cdot \nabla_x P_1 = G_{AL,\ve} + G_{REV} - L \,,
$$
where
\begin{eqnarray*}
  L(x,v) &=& \int_{\T} \int_{-1/2}^{1/2} |\omega_*\cdot \omega^\bot| 
  P_2\bigl(x,\vp,x+\delta(\alpha\omega - \omega_*/2),\vp_*\bigr) d\alpha\,d\vp_*\,,\\
  G_{REV}(x,\vp) &=& \int_{\TRout} \int_{-1/2}^{1/2} |\omega_*\cdot \omega^\bot| 
  P_2\left(x,\vp^\downarrow,x+\delta(\alpha\omega - \omega_*/2),\vpa^\downarrow\right) d\alpha\,d\vp_* \,,\\
  G_{AL,\ve}(x,\vp) &=&  \frac{2}{1-\ve}\int_{\TAin} \int_{-1/2}^{1/2} |\omega_*\cdot \tilde\omega^\bot|
  P_2\bigl(\tilde x,\tilde\vp,\tilde x+\delta(\tilde\omega/2 - \alpha\omega_*),\vp_*\bigr) d\alpha\,d\vp_* \,,
\end{eqnarray*}
with 
$$
  \delta = \frac{\ell}{\mathcal{L}} \,,\qquad   \tilde\vp = \frac{2\vp - (1+\ve)\vp_*}{1-\ve} \,,\qquad 
 \tilde x = x + O(\delta) \quad\mbox{as } \delta\to 0 \,.
$$
The Boltzmann-Grad limit is the large particle number limit $N\to\infty$, i.e. $\delta\to 0$. As usual,
the \textit{molecular chaos} assumption \cite{Cercignani} will be used. Roughly speaking, it amounts to assuming that 
initially the probability distributions of the bacteria are pairwise independent and that any pair of bacteria collides at most 
once, such that the independence is still valid for each pre-collisional state. This is the reason for writing the collision
integrals in terms of pre-collisional states. As a consequence, assuming $P_1 \to f$ implies $P_2 \to f \otimes f$ as $N \to \infty$, 
wherever it occurs in the equation. In the limit, we obtain the Boltzmann-type equation
\begin{equation}\label{Boltzmann}
\begin{split}
	&\pa_t f + \omega \cdot \nabla_x f =  G_{AL,\ve}(f,f) + G_{REV}(f,f) - L(f,f) \\
	 & = \frac{2}{1-\ve}\int_{\TAin}  b(\tilde\vp,\vpa) f(x,\tilde\vp)f(x,\vp_*) d\vpa  
	+ \int_{\TRout} b(\vp,\vpa) f(x, \vp^\downarrow)f(x, \vpa^\downarrow) d\vpa \\
	& - \int_{\T} b(\vp,\vpa) f(x, \vp)f(x, \vpa) d\vpa \,,
\end{split}
\end{equation}
with $b(\vp,\vpa) = |\omega_*\cdot \omega^\bot| = |\sin(\vp-\vpa)|$.
The collision integrals are now written as bilinear operators where, abusing notation, we have kept the same names.
	
\paragraph{Alignment limit:}
It is now straightforward to remove the regularization of the alignment collisions, i.e. to carry out the limit $\ve\to 0$,
leading to our \textit{kinetic model for myxobacteria:}
\begin{equation}\label{myxo}
\begin{split}
	&\pa_t f + \omega \cdot \nabla_x f = Q(f,f) := G_{AL}(f,f)+G_{REV}(f,f)-L(f,f) \\
	&= 2\int_{\TAin}  b(\tilde\vp,\vpa) \tilde f f_* d\vpa  
	+ \int_{\TRout} b(\vp,\vpa) f^\downarrow f_*^\downarrow d\vpa 
         - \int_{\T} b(\vp,\vpa) f f_* d\vpa \,,
\end{split}
\end{equation}
with
$$
  \TRout = \left(\vp+\frac{\pi}{2},\vp+\frac{3\pi}{2}\right) \,,\qquad \TAin = \left( \vp-\frac{\pi}{4}, \vp+\frac{\pi}{4}\right) \,,
$$ $$
  \tilde\vp = 2\vp-\vpa \,,\qquad \vp^\downarrow = \vp+\pi \,,\qquad \vpa^\downarrow = \vpa+\pi\,.
$$
Note that $\vpa\in\TAin$ satisfies $\omega(\tilde\vp)\cdot\omega(\vpa)>0$ and that $\TRout$ is a representation of the set 
of all $\vpa\in\T$ satisfying $\omega(\vp)\cdot\omega(\vpa)<0$.

\paragraph{'Maxwellian myxos':}
The factor $b(\vp,\vpa) = |\omega_*\cdot\omega^\bot| = |\sin(\vp-\vpa)|$ in the collision integrals is a consequence of 
the rod shape
of the bacteria. It gives the rate of collisions between bacteria with the directions $\vp$ and $\vpa$. Assuming instead
bacteria with circular shape makes the collision rate independent from the movement direction. By analogy to a similar 
simplification of the gas dynamics Boltzmann equation \cite{Cercignani}, we use the name \textit{Maxwellian myxos} for 
this imagined species, modeled by \eqref{myxo} with $b(\vp,\vpa)\equiv 1$.

\section{Properties of the collision operator}\label{sec:3}
	
\paragraph{Collision invariants and conservation laws:}
In the following it will be convenient to also split the loss term of the collision operator into alignment and reversal parts:
\begin{eqnarray}
   Q(f,f) &=& Q_{AL}(f,f) + Q_{REV}(f,f) \nonumber\\
    &=& \int_{\T} \left( 2b(\tilde\vp,\vpa) \mathbb{1}_{\vpa\in\TAin} \tilde f f_*  
          - b(\vp,\vpa) \mathbb{1}_{\vpa\in\TAout} f f_*\right)d\vpa  \label{Q:AL+REV}\\
    && + \int_{\TRout} b(\vp,\vpa) (f^\downarrow f_*^\downarrow - f f_*) d\vpa \,,\nonumber
 \end{eqnarray}
A weak formulation of the alignment operator is obtained by integration against a test function $\psi(\vp)$, the coordinate
change $\tilde\vp = 2\vp-\vpa \to \vp$, and subsequent symmetrization:
\begin{equation}\label{AL-weak}
  \int_{\T} Q_{AL}(f,f)\psi\,d\vp = \int_{\T}\int_{\TAout} b(\vp,\vpa) f f_* \left(\psi\left(\frac{\vp+\vpa}{2}\right)
  - \frac{\psi(\vp)+\psi(\vpa)}{2}\right) d\vpa\,d\vp
\end{equation}
This shows that the space of collision invariants of $Q_{AL}$ is two-dimensional and spanned by $\psi=1$ and $\psi=\vp$.
Furthermore, with $\psi = \vp^2$, we obtain
$$
  \int_{\T} Q_{AL}(f,f)\vp^2\,d\vp = -\frac{1}{4}\int_{\T}\int_{\TAout} b(\vp,\vpa) f f_* (\vp-\vpa)^2 d\vpa\,d\vp \le 0 \,.
$$
Therefore $Q_{AL}(f,f)=0$ implies that for $f(\vp)\ne 0$, $f(\vpa)$ vanishes for all $\vp\ne\vpa\in\TAout$. As a consequence, equilibria are concentrated at isolated angles with a pairwise distance bigger than $\pi/2$, implying that there are at
most three of them. Thus, every equilibrium distribution $f$ of $Q_{AL}$ can be written as
$$
  f(\vp) = \rho_1 \delta(\vp-\vp_1) + \rho_2 \delta(\vp-\vp_2) +\rho_3 \delta(\vp-\vp_3) \,,
$$
with $\rho_j\ge 0$ and dist$_{\T}(\vp_i,\vp_j)> \pi/2$, $i\ne j$, where
$$
  {\rm dist}_{\T}(\vp,\vpa) := \min_{k\in\mathbb{Z}} |\vp - \vpa + 2k\pi| \le \pi\,.
$$

The weak formulation of the reversal operator can be written as
\begin{eqnarray}
  \int_{\T} Q_{REV}(f,f)\psi\,d\vp &=& \frac{1}{2}\int_{\T}\int_{\TRout} b(\vp,\vpa) f f_* 
    \left(\psi^\downarrow + \psi_*^\downarrow - \psi - \psi_*\right) d\vpa\,d\vp \nonumber\\
  &=& \frac{1}{2}\int_{\T}\int_{\TAout} b(\vp,\vpa) f f_*^\downarrow
    \left(\psi^\downarrow + \psi_* - \psi - \psi_*^\downarrow\right) d\vpa\,d\vp \,,\label{REV-weak}
\end{eqnarray}
where the first equality is obtained analogously to the treatment of the alignment operator, and the second equality
is due to the coordinate change $\vpa \leftrightarrow \vpa^\downarrow$. Both forms
show that all $\pi$-periodic functions are collision invariants. However, the second representation reveals the additional
collision invariant $\psi(\vp)=\vp$. It is obvious that all $\pi$-periodic functions are
equilibria of $Q_{REV}$. However, the set of equilibria is larger: Let $g:\, (\pi/4,3\pi/4)\to\mathbb{R}_+$ be arbitrary,
 $\lambda \ge 0$, and let
 \begin{equation}\label{REV-equ}
    f(\vp) := \left\{ \begin{array}{ll} g(\vp) & \mbox{for } \vp\in\T_+ := (\pi/4,3\pi/4) \,,\\
                                                    \lambda g(\vp+\pi) & \mbox{for } \vp\in\T_- := (-3\pi/4,-\pi/4) \,,\\
                                                    0 & \mbox{else.}  \end{array}\right.
 \end{equation}
 Then it is easily checked that $Q_{REV}(f,f)=0$. We see that the set of functions unaffected by reversal collisions contains functions describing bacteria colonies which can be separated into two groups, one moving upwards with direction $\vp \in \T_+$, the other downwards with $\vp \in \T_-$ and whose distribution in each group is equal up to a proportionality constant $\lambda \geq 0$. It is important to note that the boundary angles of $\T_+$ and $\T_-$ are $\pi/2$ apart, so that reversal collisions can only occur between two individuals from different groups.\\
 The question of a characterization of the whole set of equilibria of $Q_{REV}$ seems rather difficult and is left open.

Since the collision invariants of $Q_{AL}$ are also collision invariants of $Q_{REV}$, solutions of \eqref{myxo} 
satisfy two conservation laws, \textit{conservation of the number of bacteria,} 
	\begin{align}\label{massconservation}
	\pa_t \rho + \nabla_x \cdot (\rho u) = 0 \,,
\end{align}
with the usual definition of \textit{number density} and \textit{flux:}
$$
    \rho(x,t):= \int_{\T} f(x, \vp, t)  d\vp \,,\qquad  \rho u(x,t) := \int_{\T} \omega(\vp) f(x,\vp,t) d\vp\,,
$$
and 
\begin{equation}\label{phi-cons}
   \partial_t \int_{\T} \vp f\,d\vp + \nabla_x\cdot \int_{\T} \vp\, \omega f\,d\vp = 0 \,.
\end{equation}
Note that this second conservation law depends on the representation of $\T$. However, the differences are only up
to adding a multiple of the bacteria number.

The only equilibria of $Q_{AL}$, which are also equilibria of $Q_{REV}$, are of the form
\begin{equation}\label{Equ}
   f_\infty(\vp) = \rho_+ \delta(\vp-\vp_+) + \rho_- \delta(\vp-\vp_+^\downarrow) \,,
\end{equation}
with arbitrary $\rho_\pm \ge 0$, $\vp_+\in\T$. This raises the problem that there are three free parameters, $\rho_+$, $\rho_-$, $\vp_+$, in the 
equilibrium distribution as opposed to only two conservation laws (\ref{massconservation}) and (\ref{phi-cons}).
		
\paragraph{Two group initial data:}	
The form \eqref{REV-equ} of reversal equilibria suggests to consider initial conditions 
$$
     f(x,\vp,0) = f_I(x,\vp) \,,
$$ 
satisfying
\begin{align}\label{spin}
	\supp(f_I(x,.)) \subset \T_+ \cup \T_-,  \qquad \forall x \in \R^2 \,,
\end{align}
\begin{figure}[h]
	\centering
	\includegraphics[width=7cm]{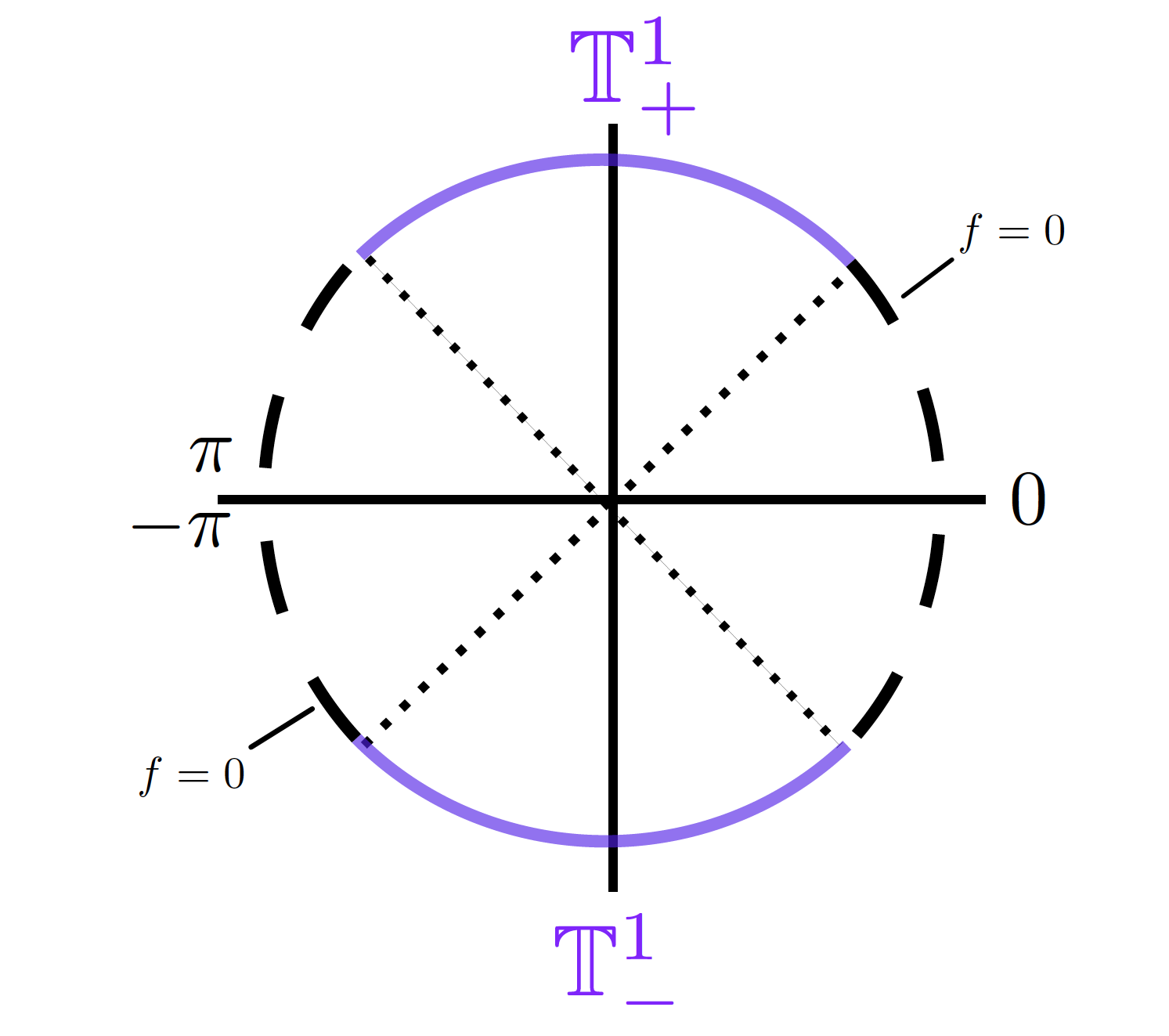}
	\caption{Support of two group data (solid lines, purple).}\label{pm}
\end{figure}
i.e., with angles in the opposite groups $\T_+$ and $\T_-$ (see Fig. \ref{pm}). It is easily seen that the property \eqref{spin} 
is propagated by \eqref{myxo}. Indeed, \textit{alignment collisions} are only possible between two individuals from the same group, producing post-collisional angles in the same group. \textit{Reversal interactions} can only occur between bacteria from different groups, causing the two individuals to swap groups. These observations imply that in this special situation the bacteria numbers in each group are conserved. With the notation 
$$
  \rho_\pm = \int_{\T_\pm} f\,d\vp \,,\qquad \rho_\pm u_\pm = \int_{\T_\pm} \omega f\,d\vp \,,
$$
we have
$$
   \partial\rho_\pm + \nabla_x\cdot (\rho_\pm u_\pm) = 0 \,,
$$
where the sum gives \eqref{massconservation}, of course. Thus, there is one additional conservation law, bringing the total
number up to three, the dimension of the set of equilibria. This will allow us to perform the (formal) macroscopic limit in Section 6.
	
\section{The spatially homogeneous problem}
	 
\paragraph{Existence and uniqueness of solutions:} We consider the initial value problem 	
\begin{align}\label{hom}
	&\pa_t f = Q(f,f), \qquad \mbox{in } \T\times(0,\infty)\\
	&f(\vp,0) = f_I(\vp) \,, \qquad \vp \in \T, \notag
\end{align}
with the collision operator as in \eqref{myxo} and no restriction on $f_I$. Existence and uniqueness in $\lone$ will be shown by the Picard theorem
since, by the boundedness of the collision cross-section $b$, the collision operator can be shown to be Lipschitz
continuous.
	
\begin{theorem}
Let $b\in L^\infty(\T\times\T)$ and $f_I \in L^1_+(\T)$. Then \eqref{hom}
has a unique global solution $f \in C\left([0,\infty),L^1_+(\T) \right) $.
\end{theorem}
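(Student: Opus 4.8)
The plan is to run the standard Picard--Lindel\"of (Banach fixed-point) scheme for the ODE $\pa_t f = Q(f,f)$ in the Banach space $\lone$, using two structural ingredients: that $Q$ is a \emph{bounded} bilinear operator on $\lone$ (hence locally Lipschitz), and that the cone $L^1_+(\T)$ is invariant while the mass $\int_\T f\,d\vp$ is conserved. Conservation of mass then turns the local solution into a global one, because the local existence time will depend on $f_I$ only through $\|f_I\|_{\lone}$.

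First I would record the basic estimates. Writing $Q(f,f)=Q^+(f,f)-f\,L[f]$ with loss rate $L[f](\vp):=\int_\T b(\vp,\vpa)f(\vpa)\,d\vpa$ and $Q^+$ the sum of the alignment and reversal gain terms, one has $\|L[f]\|_{L^\infty(\T)}\le\|b\|_{L^\infty}\|f\|_{\lone}$, and, after the substitution $\tilde\vp=2\vp-\vpa\mapsto\vp$ in the alignment gain (Jacobian $2$, which exactly cancels the prefactor $2$) and the measure-preserving shift $\vpa\mapsto\vpa^\downarrow$ in the reversal gain, $\|Q^+(f,g)\|_{\lone}\le C\|b\|_{L^\infty}\|f\|_{\lone}\|g\|_{\lone}$. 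By bilinearity this yields the Lipschitz bound $\|Q(f,f)-Q(g,g)\|_{\lone}\le C\|b\|_{L^\infty}\bigl(\|f\|_{\lone}+\|g\|_{\lone}\bigr)\|f-g\|_{\lone}$ on bounded sets. For the sign information I would work with the mild formulation
$$
  f(t)=f_I\,e^{-\int_0^t L[f(s)]\,ds}+\int_0^t e^{-\int_s^t L[f(\tau)]\,d\tau}\,Q^+\bigl(f(s),f(s)\bigr)\,ds\,,
$$
and carry out the fixed point on $X_{T,R}:=\{f\in C([0,T],\lone):\ f(t)\in L^1_+(\T),\ \|f(t)\|_{\lone}\le R\}$ with $R:=2\|f_I\|_{\lone}$ and $T=T(R,\|b\|_{L^\infty})$ small. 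Since $b\ge 0$ (true for $b=|\sin(\vp-\vpa)|$ and for the Maxwellian case $b\equiv 1$), the exponential weights lie in $(0,1]$ and $Q^+\ge 0$ preserves the cone, so the right-hand side maps $X_{T,R}$ into itself; the bilinear and Lipschitz estimates, together with the fact that $t\mapsto e^{-\int_0^t L}$ is $1$-Lipschitz in $\int_0^t L$, make it a contraction for $T$ small. Banach's theorem gives a unique $f\in X_{T,R}$, which one checks solves \eqref{hom} and lies in $C^1$ in time with values in $\lone$; the Lipschitz estimate also gives uniqueness among \emph{all} $C([0,T],\lone)$-valued solutions via Gr\"onwall.

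Next, on any interval of existence $f(t)\ge 0$, so $\|f(t)\|_{\lone}=\int_\T f(t)\,d\vp$; since $\psi\equiv 1$ is a collision invariant of both $Q_{AL}$ and $Q_{REV}$ (visible from the weak forms \eqref{AL-weak} and \eqref{REV-weak}), $\tfrac{d}{dt}\int_\T f\,d\vp=\int_\T Q(f,f)\,d\vp=0$, hence $\|f(t)\|_{\lone}\equiv\|f_I\|_{\lone}$. Because the local time $T$ depends on the data only through this conserved norm, a standard continuation argument — restart at $t=T$ with data $f(T)$, which has the same $\lone$ norm, extend to $[T,2T]$, and iterate — produces the global solution $f\in C([0,\infty),L^1_+(\T))$.

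I do not expect a genuine analytic obstacle here: the whole argument hinges on $b\in L^\infty$, which makes $Q$ a true bounded bilinear map on $\lone$ rather than an unbounded one, so the situation is much simpler than for hard-sphere kernels. The only points needing some care are bookkeeping ones — verifying that the change of variables $\tilde\vp\mapsto\vp$ turns the alignment gain into something manifestly controlled by $\|b\|_{L^\infty}\|f\|_{\lone}^2$ with the factor $2$ precisely absorbed, that $\vpa\mapsto\vpa^\downarrow$ is measure-preserving on $\T$, and that the Duhamel representation is equivalent to \eqref{hom} for $\lone$-valued solutions — together with the (implicit) hypothesis $b\ge 0$ needed for the cone $L^1_+(\T)$ to be preserved.
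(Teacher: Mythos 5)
Your proposal is correct and follows essentially the same route as the paper: Lipschitz continuity of $Q$ on bounded sets of $\lone$ via $b\in L^\infty$ and the changes of variables $\tilde\vp\mapsto\vp$ and $\vpa\mapsto\vpa^\downarrow$, Picard iteration for local existence and uniqueness, then nonnegativity and mass conservation to continue globally. The only difference is that you substantiate the positivity-preservation step (which the paper dismisses as ``obvious'') with the Duhamel formulation and the observation that $b\ge 0$ is implicitly needed — a worthwhile but minor refinement of the same argument.
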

	
\begin{proof}
Let $f,g \in \lone$ with $\|f\|_{\lone}, \|g\|_{\lone}\le \rho$, and $\bar b := \|b\|_{L^\infty(\T\times\T)}$. We split the collision operator as in \eqref{myxo}:
$$
  \|G_{AL}(f,f) - G_{AL}(g,g)\|_{\lone} \le 2\bar b\int_{\T}\int_{\TAin}  |\tilde f f_* - \tilde g g_*| d\vpa d\vp
  =  \bar b\int_{\T}\int_{\TAout}  |f f_* - g g_*| d\vpa d\vp\,,
$$ 
with the change of variables $\tilde\vp\to\vp$ as in the previous section. Further estimation gives
\begin{eqnarray*}
  \|G_{AL}(f,f) - G_{AL}(g,g)\|_{\lone} &\le& \bar b\int_{\T}\int_{\TAout} \left( |f|\, |f_* - g_*|  + |f-g|\, |g_*| \right)d\vpa d\vp \\
  &\le& 2\bar b \rho\, \|f-g\|_{\lone}
\end{eqnarray*}
For the reversal term we have
\begin{eqnarray*}
  \|G_{REV}(f,f) - G_{REV}(g,g)\|_{\lone} &\le& \bar b\int_{\T}\int_{\TRout} |f^\downarrow f_*^\downarrow 
    - g^\downarrow g_*^\downarrow| d\vpa d\vp \\
    &=& \bar b\int_{\T}\int_{\TRout} |f f_* - g g_*| d\vpa d\vp \le 2\bar b \rho\, \|f-g\|_{\lone} \,,
\end{eqnarray*}
with $(\vp^\downarrow,\vpa^\downarrow) \to (\vp,\vpa)$. An analogous estimate for the loss term finally gives
$$
  \|Q(f,f)-Q(g,g)\|_{\lone} \le 6\bar b \rho \|f-g\|_{\lone} \,.
$$
Therefore a unique local solution exists by Picard iteration. Nonnegativity and conservation of the number of bacteria,
i.e. of the $\lone$-norm, are obvious, the latter implying global existence. 
\end{proof}

\paragraph{Convergence to equilibrium:}
We study the convergence of solutions of the spatially homogeneous problem \eqref{hom} to equilibria of the 
form \eqref{Equ} as $t\to\infty$. We have, however, only partial results in this direction due to two difficulties. The first one 
is the lack of a third conservation law for general initial data. We shall therefore restrict our attention to 
\textit{two-group initial data} $f_I$ satisfying (\ref{spin}). In this case the conservation of
$$
    \int_{\T_+} f\,d\vp \,,\qquad \int_{\T_-} f\,d\vp \,,\qquad  \mbox{and}\quad \int_{\T} \vp f\,d\vp \,,
$$
allows to determine the parameters in \eqref{Equ} from the initial data:
$$
   \rho_+ = \int_{\T_+} f_I\,d\vp \,,\qquad \rho_- = \int_{\T_-} f_I\,d\vp \,,\qquad  
   \vp_+ =  \frac{1}{\rho_+ + \rho_-} \left(\int_{\T_+} \vp f_I\,d\vp + \int_{\T_-}\vp^\downarrow f_I\,d\vp\right)\,.
$$
Note that $\vp_+\in\T_+$ is an average angle where, however, angles in $\T_-$ are mapped to $\T_+$ by reversal.

First, we state a preliminary result on the decay of the variance 
$$
   V[f] := \int_{\T_+} (\vp - \vp_+)^2 f \,d\vp
$$
for the even more restricted case of \emph{one-group initial data} supported in $\T_+ = (\pi/4,3\pi/4)$, 
where only alignment collisions occur.

\begin{lemma}\label{lem:AL}
Let $f_I \in L_+^1(\T)$ with supp$(f_I) \subset \T_+$ and let $f$ be a solution of \eqref{hom}. Then \\
a) for Maxwellian myxos, i.e. $b(\vp,\vpa)\equiv 1$,
$$
   V[f(\cdot,t)] = e^{-t \rho_+/2} V[f_I] \,,
$$
b) and for rod shaped myxobacteria, i.e. $b(\vp,\vpa)=|\sin(\vp-\vpa)|$,
$$
   \frac{1}{\rho_+} \left( M_{1,I}^{-1} + 2t\right)^{-2} \le V[f(\cdot,t)] \le \left( V[f_I]^{-1/2} + \kappa t \right)^{-2} \,,
$$
with 
$$ 
  M_{1,I} := \int_{\T_+} |\vp - \vp_+| f_I \,d\vp \,,\qquad    \kappa = \frac{\sqrt{\rho_+}}{4\pi}\,.
$$
\end{lemma}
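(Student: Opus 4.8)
The plan is to reduce everything to the weak form \eqref{AL-weak} of the alignment operator, which is the only collision mechanism acting here. Since an alignment collision replaces the two incoming angles by their midpoint and $\T_+$ is an interval of length $\pi/2$, the support of $f(\cdot,t)$ stays in $\T_+$ for all $t$; no two angles in $\T_+$ can trigger a reversal collision, so $f$ solves $\pa_t f = Q_{AL}(f,f)$. Hence $\rho_+$ and $\int_{\T_+}\vp f\,d\vp$ are conserved; for one-group data this means $\vp_+$ is the (constant) mean angle $\rho_+^{-1}\int_{\T_+}\vp f\,d\vp$, so $\int_{\T_+}(\vp-\vp_+)f\,d\vp\equiv0$. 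Also any $\vp,\vpa\in\supp f$ satisfy $|\vp-\vpa|<\pi/2$, so the $\vpa$-integral in \eqref{AL-weak} effectively runs over all of $\T_+$, and $\tfrac{2}{\pi}|\vp-\vpa|\le|\sin(\vp-\vpa)|\le|\vp-\vpa|$ there. Plugging $\psi(\vp)=(\vp-\vp_+)^2$ into \eqref{AL-weak}, the bracket $\psi(\tfrac{\vp+\vpa}{2})-\tfrac12\big(\psi(\vp)+\psi(\vpa)\big)$ kills the affine part of $\psi$ and equals $-\tfrac14(\vp-\vpa)^2$, so
\[
   \frac{d}{dt}V[f] = -\frac14\int_{\T_+}\!\!\int_{\T_+} b(\vp,\vpa)\, f f_*\,(\vp-\vpa)^2\, d\vpa\, d\vp \,;
\]
expanding $(\vp-\vpa)^2$ around $\vp_+$ and using the vanishing first moment gives $\int_{\T_+}\!\int_{\T_+} f f_*(\vp-\vpa)^2\, d\vpa\, d\vp = 2\rho_+ V[f]$. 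Part a) is then immediate: for $b\equiv1$ the identity reads $\frac{d}{dt}V[f]=-\tfrac{\rho_+}{2}V[f]$, hence $V[f(\cdot,t)]=e^{-t\rho_+/2}V[f_I]$.

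For the upper bound in b) I would use $|\sin(\vp-\vpa)|\ge\tfrac{2}{\pi}|\vp-\vpa|$ and then bound the third moment below by $V^{3/2}$ — either by Jensen for $t\mapsto t^{3/2}$ against the probability measure $f f_*/\rho_+^2$, or by Cauchy--Schwarz together with $\int_{\T_+}\!\int_{\T_+} f f_*|\vp-\vpa|\, d\vpa\, d\vp\le 2\rho_+\!\int_{\T_+}|\vp-\vp_+|f\,d\vp\le 2\rho_+\sqrt{\rho_+V[f]}$. Either route yields $\frac{d}{dt}V[f]\le -c\sqrt{\rho_+}\,V[f]^{3/2}$ with an explicit $c$, so that $\frac{d}{dt}V[f]^{-1/2}\ge\tfrac{c}{2}\sqrt{\rho_+}$ and $V[f(\cdot,t)]\le\big(V[f_I]^{-1/2}+\kappa t\big)^{-2}$; the constant obtained this way is at least $\sqrt{\rho_+}/(4\pi)$, so the bound holds a fortiori with the value of $\kappa$ in the statement.

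For the lower bound in b), a Gronwall estimate applied directly to $V$ would give only exponential decay, which is too weak; instead I would track the first absolute moment $M_1[f]:=\int_{\T_+}|\vp-\vp_+|f\,d\vp$, for which \eqref{AL-weak} holds with the Lipschitz test function $\psi(\vp)=|\vp-\vp_+|$. The bracket $\Delta:=\tfrac12\big(|\vp-\vp_+|+|\vpa-\vp_+|\big)-\big|\tfrac{\vp+\vpa}{2}-\vp_+\big|\ge0$ vanishes unless $\vp-\vp_+$ and $\vpa-\vp_+$ have opposite signs, in which case $\Delta=\min\big(|\vp-\vp_+|,|\vpa-\vp_+|\big)$; hence $|\sin(\vp-\vpa)|\,\Delta\le|\vp-\vpa|\,\Delta\le\big(|\vp-\vp_+|+|\vpa-\vp_+|\big)\min\big(|\vp-\vp_+|,|\vpa-\vp_+|\big)\le 2|\vp-\vp_+|\,|\vpa-\vp_+|$, so $\frac{d}{dt}M_1[f]=-\int_{\T_+}\!\int_{\T_+} b\, f f_*\,\Delta\, d\vpa\, d\vp\ge -2M_1[f]^2$. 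Integrating $\frac{d}{dt}M_1[f]^{-1}\le 2$ gives $M_1[f(\cdot,t)]\ge(M_{1,I}^{-1}+2t)^{-1}$, and Cauchy--Schwarz ($M_1[f]^2\le\rho_+V[f]$) then yields $V[f(\cdot,t)]\ge\rho_+^{-1}(M_{1,I}^{-1}+2t)^{-2}$.

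The individual estimates are elementary; the crux is structural. Because $b=|\sin(\vp-\vpa)|$ vanishes linearly on the diagonal, the dissipation of $V$ is bounded below only by a multiple of $V^{3/2}$ rather than $V$, which is exactly what turns exponential decay into an algebraic rate, and the matching lower bound cannot be read off $V$ directly but must be routed through $M_1$. Identifying $M_1$ as the right auxiliary quantity and carrying out the sign analysis that produces $\frac{d}{dt}M_1[f]\ge -2M_1[f]^2$ is the main point to get right; the remaining items — scalar comparison for these differential inequalities, and the validity of \eqref{AL-weak} for the Lipschitz weight — are routine.
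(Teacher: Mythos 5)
Your proposal is correct and follows essentially the same route as the paper: the same dissipation identity for $V[f]$ from \eqref{AL-weak}, the same Jensen-type lower bound of the dissipation by a multiple of $V^{3/2}$ for the upper bound (your single application of Jensen on the product measure even yields a slightly better constant than the paper's $\kappa$, so the stated bound follows a fortiori), and the same auxiliary functional $M_1$ with the differential inequality $\frac{d}{dt}M_1 \ge -2M_1^2$ for the lower bound, where your explicit sign analysis of the bracket is just an unpacking of the elementary inequality the paper cites from \cite{alonso2}.
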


\begin{rem}
The result of Lemma \ref{lem:AL} b) corresponds to \textit{Haff's law} \cite{haff} for the spatially homogeneous dissipative 
Boltzmann equation, stating that the variance of the distribution decays like $t^{-2}$. There the degeneracy of the collision
cross section is the same as here. Our proof follows along the lines of \cite{alonso2}.
\end{rem}

\begin{proof} 
For the computation of the time derivative of the variance along solutions of \eqref{hom} the formula \eqref{AL-weak}
 with $\psi(\vp) = \mathbb{1}_{\T_+}(\vp)(\vp-\vp_+)^2$ can be used:
\begin{equation}\label{HAL-dissipation}
   \frac{dV[f]}{dt} = -\frac{1}{4} \int_{\T_+} \int_{\T_+} b(\vp,\vpa) f f_*(\vp-\vpa)^2 d\vpa d\vp \,.
\end{equation}
a) We compute
\begin{eqnarray*}
   \frac{dV[f]}{dt} = -\frac{1}{4}\int_{\T_+} \int_{\T_+} f f_*(\vp-\vp_+ + \vp_+ - \vpa)^2 d\vpa d\vp 
   = -\frac{\rho_+}{2} \int_{\T_+} f (\vp-\vp_+)^2 d\vp = -\frac{\rho_+}{2} V[f] \,.
\end{eqnarray*}
b) Since $|\vp-\vpa|\le \pi/2$ in the right hand side of \eqref{HAL-dissipation}, we have 
$$
  b(\vp,\vpa) = |\sin(\vp-\vpa)| \ge \frac{2}{\pi}|\vp-\vpa| \,,
$$
and therefore, using the Jensen inequality twice,
\begin{eqnarray*}
  &&\int_{\T_+} \int_{\T_+} b(\vp,\vpa) f f_*(\vp-\vpa)^2 d\vpa d\vp \ge
  \frac{2}{\pi} \int_{\T_+} f \left(\int_{\T_+} f_* |\vp-\vpa|^3 d\vpa \right)d\vp \\
  && \ge \frac{2\rho_+}{\pi} \int_{\T_+} f \left|\int_{\T_+} \frac{f_*}{\rho_+} (\vp-\vpa) d\vpa \right|^3 d\vp
  = \frac{2\rho_+}{\pi} \int_{\T_+} f \left| \vp-\vp_+\right|^3 d\vp \\
  && \ge \frac{2\rho_+^2}{\pi} \left(\int_{\T_+} \frac{f}{\rho_+}  \left(\vp-\vp_+\right)^2 d\vp \right)^{3/2}
  = \frac{2\sqrt{\rho_+}}{\pi} \left(V[f] \right)^{3/2} \,,
\end{eqnarray*}
giving
\begin{eqnarray*}
  \frac{dV[f]}{dt} &\le& - \frac{\sqrt{\rho_+}}{2\pi} V[f]^{3/2} \,,
\end{eqnarray*}
implying the upper bound by solving the corresponding differential equation. A lower bound is first derived for 
$$
   M_1[f](t) := \int_{\T_+} |\vp - \vp_+| f \,d\vp \,.
$$
We again use \eqref{AL-weak}, now with $\psi(\vp) = \mathbb{1}_{\T_+}(\vp)|\vp-\vp_+|$:
$$
   \frac{dM_1[f]}{dt} = -\frac{1}{2} \int_{\T_+} \int_{\T_+} |\sin(\vp-\vpa)| f f_*\left(|\vp-\vp_+| + |\vpa-\vp_+| - |\vp+\vpa-2\vp_+|  \right) d\vpa d\vp \,.
$$
With the elementary inequalities (see also \cite[equ. (3.3)]{alonso2} for the second)
\begin{eqnarray*}
  (|a|+|b|-|a+b|) |\sin(a-b)| \le (|a|+|b|-|a+b|) |a-b|\le 4|a|\,|b| \,,
\end{eqnarray*}
we obtain
$$
   \frac{dM_1[f]}{dt} \ge -2M_1[f]^2 \,,
$$
implying
$$
   M_1[f](t) \ge \left(M_{1,I}^{-1} + 2t\right)^{-1}  \,.
$$
An application of the Cauchy-Schwarz inequality $M_1[f]^2 \le \rho_+ V[f]$ concludes the proof.
\end{proof}

Lemma \ref{lem:AL} can be interpreted as a convergence result with respect to the \textit{Wasserstein distance} \cite{Villani}. In particular, for
$f,g\in\mathcal{P}(\T)$ (the space of probability measures), the Wasserstein distance with quadratic cost is defined by
$$
   W_2^{\T}(f,g):= \inf_{\pi \in \Pi(f,g)}{\left(\iint_{\T \times \T} {\rm dist}_{\T}(\vp_1,\vp_2)^2 \; d\pi(\vp_1,\vp_2)\right)} ^{1/2},
$$ 
where $\Pi(f,g)\subset \mathcal{P}(\T\times\T)$ is the set of all transference plans $\pi$, satisfying 
$\pi(\cdot,\T)=f$, $\pi(\T,\cdot)=g$. We shall also use the straightforward extension of the definition to pairs of measures
with the same total mass, not necessarily equal to one. It is well known that for $g(\vp) = m\delta(\vp-\hat\vp)$ the only 
possible transference plan is $\pi = (f\otimes g)/m$ and therefore
\begin{equation}\label{W2=variance}
   W_2^{\T}(f,g)^2 = \int_{\T} {\rm dist}_{\T}(\vp,\hat\vp)^2 f \,d\vp \,,
\end{equation}
implying for distributions with support $\T_+$ as in Lemma \ref{lem:AL} that $V[f] = W_2^{\T}(f,f_\infty)^2$.

Since for the two-group case we are dealing with distributions, which are the sums of two point masses, we shall need the following result.

\begin{lemma}\label{lem:W2split}
Let $f,g\in\mathcal{P}(\T)$, ${\rm supp}(f),\,{\rm supp}(g) \subset \T_+\cup\T_-$, $f(\T_\pm)=g(\T_\pm)$. Then
$$
   W_2^{\T}(f,g)^2 = W_2^{\T_+}\left(f,g\right)^2 + W_2^{\T_-}\left(f,g\right)^2 \,,
$$
where on the right hand side $f,g$ denote the restrictions to $\T_+$ and, respectively, $\T_-$.
\end{lemma}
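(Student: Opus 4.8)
The key observation is that the hypotheses $\supp(f),\supp(g)\subset\T_+\cup\T_-$ together with $\mathrm{dist}_{\T}(\vp_1,\vp_2)$ behave nicely on these two arcs: since $\T_+$ and $\T_-$ are arcs of length $\pi/2$ whose endpoints are $\pi/2$ apart, the distance between any $\vp_1\in\T_+$ and any $\vp_2\in\T_-$ is strictly larger than any distance within $\T_+$ or within $\T_-$ — in fact it is at least $\pi/2$, while intra-arc distances are at most $\pi/2$. So the plan is to show that the optimal transference plan never moves mass between the two arcs, which immediately splits the cost into the two claimed pieces.

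First I would establish the lower bound $W_2^{\T}(f,g)^2\ge W_2^{\T_+}(f,g)^2+W_2^{\T_-}(f,g)^2$. Given any $\pi\in\Pi(f,g)$, decompose it as $\pi = \pi_{++}+\pi_{+-}+\pi_{-+}+\pi_{--}$ according to which arc the source and target points lie in (this is a genuine partition since $f$ and $g$ are supported on $\T_+\cup\T_-$, up to a null set of the boundary points which I neglect as the paper does throughout). Using $\mathrm{dist}_{\T}(\vp_1,\vp_2)\ge 0$ and discarding the cross terms $\pi_{+-},\pi_{-+}$, I get $\iint \mathrm{dist}_{\T}^2\,d\pi \ge \iint\mathrm{dist}_{\T}^2\,d\pi_{++} + \iint\mathrm{dist}_{\T}^2\,d\pi_{--}$; but the marginals of $\pi_{++}$ need not be exactly $f|_{\T_+}$ and $g|_{\T_+}$. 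Here I use the mass-balance hypothesis $f(\T_\pm)=g(\T_\pm)$: summing marginal constraints forces $\pi_{+-}$ and $\pi_{-+}$ to have equal total mass, and then one can correct $\pi_{++}$ by a sub-optimal rearrangement to a plan in $\Pi(f|_{\T_+},g|_{\T_+})$ — but this correction could only add cost involving cross-arc distances, which runs the wrong way. The cleaner route is to invoke Kantorovich duality or simply the gluing/restriction argument: since every admissible $\pi$ can be modified to avoid cross terms without increasing cost, it suffices to show the optimal plan itself has $\pi_{+-}=\pi_{-+}=0$.

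Thus the heart of the argument is: take an optimal plan $\pi$ for $W_2^{\T}(f,g)$ and suppose $\pi_{+-}$ has positive mass $m>0$; by mass balance $\pi_{-+}$ also has mass $m$. Build a competitor $\pi'$ by removing $\pi_{+-}$ and $\pi_{-+}$ and replacing them with a plan supported in $\T_+\times\T_+$ and $\T_-\times\T_-$ having the same marginals (possible because the removed pieces have matching marginal masses on each arc). Since cross-arc distances are $\ge\pi/2$ and intra-arc distances are $\le\pi/2$, the new cost is no larger — strictly smaller unless all the moved mass sat at distance exactly $\pi/2$, a boundary case that contributes nothing. Hence the optimal $\pi$ may be taken block-diagonal, its restriction to $\T_+\times\T_+$ lies in $\Pi(f|_{\T_+},g|_{\T_+})$ and likewise for $\T_-$, and the total cost is exactly $W_2^{\T_+}(f,g)^2+W_2^{\T_-}(f,g)^2$. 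The reverse inequality is trivial: gluing an optimal plan on $\T_+$ and an optimal plan on $\T_-$ produces an admissible $\pi\in\Pi(f,g)$ whose cost is the sum. Combining the two directions gives the identity.

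The main obstacle is the rearrangement step — verifying carefully that one can swap out the cross-arc mass for intra-arc mass while keeping a valid transference plan and not increasing cost. The mass-balance hypothesis $f(\T_\pm)=g(\T_\pm)$ is exactly what makes this possible, and the strict separation of scales ($\text{intra}\le\pi/2\le\text{inter}$) is what makes it cost-non-increasing; once those two facts are in hand the rest is bookkeeping. One should also note in passing why $\mathrm{dist}_{\T}$ restricted to $\T_\pm$ coincides with $\mathrm{dist}_{\T_\pm}$ (the arc metric on the sub-torus), which holds precisely because each arc has diameter $\le\pi/2<\pi$ so geodesics stay inside.
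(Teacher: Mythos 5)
Your proof is correct and rests on the same key idea as the paper's: the two cross-arc blocks of a transference plan have equal mass (by the hypothesis $f(\T_\pm)=g(\T_\pm)$), so that mass can be rerouted within the arcs without increasing the cost, since inter-arc distances are at least $\pi/2$ while intra-arc distances are at most $\pi/2$. The only difference is in execution: the paper reduces to sums of point masses and performs finitely many pairwise swaps of atoms (leaving the general case to a density argument), whereas you perform a single bulk swap directly on general measures, which is if anything slightly cleaner.
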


\begin{rem}
The result is actually a rather obvious consequence of the fact that the distance between points within $\T_\pm$ is 
never larger than the distance between a point in $\T_+$ and a point in $\T_-$, with the consequence that there always 
exists an optimal transference plan transferring only within the two groups.
\end{rem}

\begin{proof}
We only give a proof for the case where $f$ and $g$ are sums of point measures, since the result then follows by a density argument.
So let
$$
    f = \sum_{i=1}^M f_i \delta_{\vp_i} \,,\qquad g = \sum_{j=1}^N g_j \delta_{\psi_j} \,.
$$ 
A transference plan is then determined by a matrix $\pi\in \R^{M\times N}$ with nonnegative entries, such that
$$
   \sum_{j=1}^N \pi_{ij} = f_i \,,\qquad \sum_{i=1}^M \pi_{ij} = g_j \,.
$$
The statement of the lemma means that there exists an optimal $\pi$ such that 
\begin{equation}\label{good-transference}
   \pi_{ij} > 0  \qquad\Longleftrightarrow\qquad \vp_i,\psi_j \in \T_+ \quad\mbox{or}\quad  \vp_i,\psi_j \in \T_-\,.
\end{equation}
Let now $\pi$ be a general transference plan and assume that there exists $(i,j)$ such that $\vp_i\in\T_+$, $\psi_j\in\T_-$,
$\pi_{ij}>0$. This means that some mass is transferred from $\T_+$ to $\T_-$. Since the total masses are the same in
both groups, the mass balance requires that also some mass is transferred from $\T_-$ to $\T_+$, i.e. there exists
$(i_*,j_*)$ such that $\vp_{i_*}\in\T_-$, $\psi_{j_*}\in\T_+$, $\pi_{i_* j_*}>0$.

The idea is that in this situation the transference plan can be improved by moving mass $m:= \min\{\pi_{ij},\pi_{i_* j_*}\}$
in a cheaper way by the changes 
$$
  \pi_{ij} \to \pi_{ij} - m \,,\quad \pi_{i_* j_*} \to \pi_{i_* j_*} - m \,,\quad 
  \pi_{ij_*} \to \pi_{ij_*} + m \,,\quad \pi_{i_* j} \to \pi_{i_* j} + m \,.
$$
This means that the contribution 
$$
   m\left({\rm dist}_{\T}(\vp_i,\psi_j)^2 + {\rm dist}_{\T}(\vp_{i_*},\psi_{j_*})^2\right) \ge \frac{m\pi^2}{2}
$$
to the total cost is replaced by 
$$
    m\left((\vp_i - \psi_{j_*})^2 + (\vp_{i_*} - \psi_j)^2\right) \le \frac{m\pi^2}{2} \,,
$$
and in the improved transference plan either $\pi_{ij}$ or $\pi_{i_* j_*}$ is replaced by zero. Iterating the procedure, 
an improved transference plan satisfying \eqref{good-transference} is reached in finitely many steps.
\end{proof}

For the two-group case with equilibrium $f_\infty$ given in \eqref{Equ}, it seems natural to examine the evolution of
the Wasserstein distance 
$$
  W_2^{\T}(f,f_\infty)^2 = \int_{\T_+} (\vp - \vp_+)^2 f \,d\vp + \int_{\T_-} (\vp - \vp_+ + \pi)^2 f \,d\vp\,.
$$
For the computation of its time derivative along solutions of \eqref{hom} the formulas \eqref{AL-weak}, \eqref{REV-weak}
with $\psi(\vp) = \mathbb{1}_{\T_+}(\vp)(\vp-\bar\vp_+)^2 + \mathbb{1}_{\T_-}(\vp)(\vp-\bar\vp_-)^2$ can be used:
\begin{eqnarray}
   \frac{d}{dt}W_2^{\T}(f,f_\infty)^2 &=& -\frac{1}{4} \int_{\T_+} \int_{\T_+} b(\vp,\vpa) f f_*(\vp-\vpa)^2 d\vpa d\vp \nonumber\\
   &&-\frac{1}{4} \int_{\T_-}\int_{\T_-} b(\vp,\vpa) f f_*(\vp-\vpa)^2 d\vpa d\vp  \le 0\,.\label{HAL-dissipation2}
\end{eqnarray}
Note that the reversal collisions do not contribute to the right hand side which vanishes, whenever concentration is
reached in both groups, even when the two concentration angles are not opposite each other. Therefore it is not possible
to derive a differential inequality for $W_2^{\T}(f,f_\infty)$ as in the proof of Lemma \ref{lem:AL}.

We have been able to overcome this problem only for Maxwellian myxos, where we construct a Lyapunov function 
of the form 
$$
  \h[f] = W_2^{\T}(f,\bar f)^2 + \gamma W_2^{\T}(\bar f,f_\infty)^2 \,,
$$
with $\gamma>0$, and where $\bar f$ denotes the partial equilibrium
\begin{equation}\label{av-angles}
   \bar f(\vp,t) = \rho_+ \delta(\vp - \bar\vp_+(t)) + \rho_- \delta(\vp - \bar\vp_-(t)) \,,\qquad\mbox{with }
   \bar\vp_\pm(t) := \frac{1}{\rho_\pm} \int_{\T_\pm} \vp f(\vp,t) d\vp \,.
\end{equation}
This implies 
$$
  W_2^{\T}(f,\bar f)^2 = \int_{\T_+} (\vp - \bar\vp_+)^2 f \,d\vp + \int_{\T_-} (\vp - \bar\vp_-)^2 f \,d\vp \,,
$$
and
\begin{align}\label{Hrev}
   W_2^{\T}(\bar f,f_\infty)^2 = \rho_+ (\bar\vp_+ - \vp_+)^2 + \rho_- (\bar\vp_- - \vp_+^\downarrow)^2
   = \frac{\rho_+ \rho_-}{\rho_+ + \rho_-} (\bar\vp_+ - \bar\vp_- - \pi)^2\,,
\end{align}
where the second equality is due to the conservation law \eqref{phi-cons}, i.e.,
$$
  \rho_+\bar\vp_+ + \rho_- \bar\vp_- = \rho_+\vp_+ + \rho_- \vp_+^\downarrow \,.
$$
For the time derivative of the first contribution we obtain, similarly to \eqref{HAL-dissipation2}, but now with $b\equiv 1$,
\begin{eqnarray*}
  \frac{d}{dt} W_2^{\T}(f,\bar f)^2 &=&  -\frac{1}{4} \int_{\T_+} \int_{\T_+} f f_*(\vp-\vpa)^2 d\vpa d\vp 
   -\frac{1}{4} \int_{\T_-}\int_{\T_-} f f_*(\vp-\vpa)^2 d\vpa d\vp \\
   && + 2\rho_+ \rho_- (\bar\vp_+ - \bar\vp_- - \pi)^2 \,,
\end{eqnarray*}
where the nonnegative term in the second line results from the reversal collisions. The time derivative of the second contribution is not influenced by alignment collisions:
$$
  \frac{d}{dt} W_2^{\T}(\bar f,f_\infty)^2  = -2\rho_+\rho_-(\bar\vp_+ - \bar\vp_- - \pi)^2 \,,
$$
from which, together with \eqref{Hrev}, exponential decay of $W_2^{\T}(\bar{ f},f_{\infty})^2$, the reversal part of our Lyapunov function follows.  
Finally, the identity
$$
   \int_{\T_\pm} \int_{\T_\pm} f f_*(\vp-\vpa)^2 d\vpa d\vp = 2\rho_\pm \int_{\T_\pm} f (\vp-\bar\vp_\pm)^2 d\vp
$$
implies
\begin{eqnarray*}
   \frac{d\h[f]}{dt} &=&  -\frac{\rho_+}{2} \int_{\T_+} f (\vp-\bar\vp_+)^2 d\vp 
   -\frac{\rho_-}{2} \int_{\T_-} f (\vp-\bar\vp_-)^2 d\vp \\
   && - 2(\gamma-1)\rho_+ \rho_- (\bar\vp_+ - \bar\vp_- - \pi)^2 \le 0\,,
\end{eqnarray*}
for $\gamma\ge 1$. It is easily seen that with the choice $\gamma = 8/7$ we have
\begin{equation}\label{H-dissipation}
   \frac{d\h[f]}{dt} \le -2\lambda \h[f] \,,\qquad\mbox{with } \lambda = \frac{1}{4} \min\{\rho_+,\rho_-\} \,.
\end{equation}

\begin{theorem}
Let $f_I \in L_+^1(\T)$ with ${\rm supp}(f_I) \subset \T_+\cup \T_-$, and let $f$ be a solution of \eqref{hom}. Then
for Maxwellian myxos, i.e. $b(\vp,\vpa)\equiv 1$, there exists $C>0$, such that
$$
   W_2^{\T}(f(\cdot,t),f_\infty) \le C e^{-\lambda t} \,,\qquad \forall\, t\ge 0 \,,
$$
with $f_\infty$ defined in \eqref{Equ} and $\lambda$ as in \eqref{H-dissipation}.
\end{theorem}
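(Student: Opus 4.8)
The plan is to run a Grönwall argument on the Lyapunov functional $\h$ constructed above and then transfer the resulting decay back to $W_2^{\T}(f,f_\infty)$ by the triangle inequality; most of the work has already been done in deriving \eqref{H-dissipation}, so what remains is assembly and bookkeeping.

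First I would note that since $\mathrm{supp}(f_I)\subset\T_+\cup\T_-$ and this property is propagated by \eqref{hom} (as established in Section \ref{sec:3}), the solution $f(\cdot,t)$ stays of two-group type for all $t\ge 0$; hence the average angles $\bar\vp_\pm(t)$ of \eqref{av-angles}, the partial equilibrium $\bar f(\cdot,t)$, and $\h[f(\cdot,t)]$ are well defined for all $t$. Moreover $\h[f_I]<\infty$, since on the compact torus every term is bounded by a fixed multiple of $(\rho_++\rho_-)\pi^2$. The dissipation estimate \eqref{H-dissipation}, $\frac{d}{dt}\h[f]\le -2\lambda\h[f]$ with $\lambda=\frac14\min\{\rho_+,\rho_-\}$, then yields by Grönwall
$$
  \h[f(\cdot,t)] \le \h[f_I]\, e^{-2\lambda t}\,,\qquad t\ge 0\,.
$$

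Next I would bound $W_2^{\T}(f,f_\infty)$ in terms of $\h[f]$. By the triangle inequality for the Wasserstein distance between measures of common total mass $\rho_++\rho_-$,
$$
  W_2^{\T}(f,f_\infty) \le W_2^{\T}(f,\bar f) + W_2^{\T}(\bar f,f_\infty)\,,
$$
so that $W_2^{\T}(f,f_\infty)^2 \le 2W_2^{\T}(f,\bar f)^2 + 2W_2^{\T}(\bar f,f_\infty)^2 \le 2\h[f]$, where the last step uses $\gamma=8/7\ge 1$. Combining with the Grönwall bound gives $W_2^{\T}(f(\cdot,t),f_\infty)^2 \le 2\h[f_I]\,e^{-2\lambda t}$, which is the assertion with $C=\sqrt{2\,\h[f_I]}$ (admissible since the theorem only requires existence of some $C>0$ depending on the data).

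The only points needing a little care — and where I would slow down — are the identifications implicit in the above: that $\bar\vp_\pm(t)$ remain in the intervals $\T_\pm$ (being averages of angles in $\T_\pm$, hence in their convex hulls), that all pairwise distances occurring are at most $\pi$ so that $\mathrm{dist}_{\T}$ coincides with the coordinate differences used in \eqref{Hrev} and in the formula for $W_2^{\T}(f,f_\infty)^2$, and that Lemma \ref{lem:W2split} legitimately applies to the pairs $(f,\bar f)$ and $(\bar f,f_\infty)$ (so that the split representations of $W_2^{\T}(f,\bar f)^2$ and $W_2^{\T}(\bar f,f_\infty)^2$ are valid). All of these follow from the fact that the supports sit in two intervals of length $\pi/2$ that are $\pi/2$ apart. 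There is no genuine analytic obstacle left, since the construction of $\h$ and the proof of \eqref{H-dissipation} were carried out before the statement; the proof is essentially a one-line Grönwall estimate followed by a triangle inequality.
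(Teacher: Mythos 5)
Your argument is correct and is exactly the paper's proof: Grönwall applied to the dissipation inequality \eqref{H-dissipation}, followed by the triangle inequality and $\gamma=8/7\ge 1$ to get $W_2^{\T}(f,f_\infty)^2\le 2\h[f]$, with $C=\sqrt{2\h[f_I]}$. The additional checks you flag (propagation of the two-group support, finiteness of $\h[f_I]$, well-definedness of $\bar\vp_\pm$) are all consistent with what the paper establishes in Sections \ref{sec:3} and 4 before stating the theorem.
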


\begin{proof}
After using \eqref{H-dissipation}, it only remains to use the triangle inequality for the Wasserstein distance to obtain
$W_2^{\T}(f,f_\infty)^2 \le 2\h[f]$.
\end{proof}

\section{Numerical Simulations}

\paragraph{Discretization:}
The results of the preceding section will be illustrated by numerical simulations of the spatially homogeneous model (\ref{hom}). Discretization in the angle direction is based on an equidistant grid
$$
    \vp_k = \frac{(k-n)\pi}{n} \,,\qquad k = {\color{violet} 0},\ldots,2n \,,
$$
with an even number of grid points, guaranteeing that the grid is invariant under reversal collisions, i.e., with $\vp_k$
also $\vp_k^\downarrow=\vp_{k+n}$ is a grid point. Similarly, only those alignment collisions between discrete angles
will be allowed, which produce post-collisional angles belonging to the grid. This is facilitated by rewriting the 
alignment collision operator \eqref{Q:AL+REV} as 
$$
  Q_{AL}(f,f) = 2\int_{\TAin} b(\tilde\vp,\vpa) (\tilde f f_* - f \tilde f_*)d\vpa \,,
$$
with $\tilde \vp = 2\vp-\vpa$, $\tilde\vp_* = 2\vpa-\vp$, before discretization. Note that in this form mass conservation is
obvious since $b(\tilde\vp,\vpa) = b(\tilde\vp_*,\vp)$, and the grid is invariant under the map 
$(\vp,\vpa)=(\vp_k,\vp_{k_*})\mapsto (\tilde\vp,\tilde\vp_*)=(\vp_{2k-k_*},\vp_{2k_*-k})$. Finally, we always choose $n$
odd to avoid the angle $\pi/2$ between grid angles and, thus, the ambiguity between alignment and reversal collisions.

Solutions of \eqref{hom} are approximated at grid points by
$$
   f^n(t) := (f_1(t),\ldots,f_{2n}(t)) \approx (f(\vp_1,t),\ldots,f(\vp_{2n},t)) \,,
$$
extended periodically by $f_{k+2n}(t) = f_k(t)$. This straightforwardly leads to the discrete model
\begin{align}\label{discrete}
\frac{df_k}{dt} = Q^n(f^n,f^n)_k \,,
\end{align}
with 
$$
   Q^n(f^n,f^n)_k := \frac{2\pi}{n}\sum_{|k_*-k|<n/4} b_{2k-k_*,k_*} (f_{2k-k_*}f_{k_*} - f_k f_{2k_*-k}) 
   + \frac{\pi}{n}\sum_{|k_*-k|>n/2} b_{k,k_*} (f_{k+n}f_{k_*+n} - f_k f_{k_*}) \,,
$$
and $b_{k,k_*}:=b(\vp_k,\vp_{k_*})$. 

For the time discretization the \textit{explicit Euler scheme} is used, such that the total mass is conserved by the
discrete scheme, which has been implemented in \textsc{Matlab}.

\paragraph{Numerical simulations with two group initial conditions:}

Simulations have been carried out with $n=201$ and with the time step $\Delta t = 0.1$. In the first rows of Figures \ref{spIC1}, \ref{spIC2}, \ref{genIC}, 
density is color coded as a function of $\vp$ (vertically) and $t$ (horizontally). The plots in the second rows show snapshots of the distribution function $f$ at different times. 

Although we only provide a proof for Maxwellian myxos, we expect solutions of (\ref{hom}) with initial data satisfying (\ref{spin}) to converge to the equilibrium $f_{\infty}$, given by \eqref{Equ}, also for rod shaped myxobacteria. This conjecture is supported by the simulation results depicted in Figures \ref{spIC1}, \ref{spIC2}.

On the left side of Figure \ref{spIC1} the initial distribution is uniform within both $\T_{+}$ and $\T_{-}$, but with zero mass outside. The equilibrium angle is given by $\vp_+ = \frac{\pi}{2}$. The initial data on the right side are similar, but with no mass in intervals around $\pi/2$ and $-\pi/2$, which again causes $\vp_+=\frac{\pi}{2}$. 

In the left part of Figure \ref{spIC2} the initial data are supported in $\T_+$, therefore excluding reversal collisions. The discretization preserves the mass  conservation in both $\T_+$ and $\T_-$ separately. This is the situation of Lemma \ref{lem:AL} b). The decay estimate for the variance
as $t^{-2}$ (Haff's law) is demonstrated by the left part of Figure \ref{V}. The simulation has also been carried out for Maxwellian myxos, demonstrating
the exponential decay of the variance in this case (Figure \ref{V}, right).
The right part of Figure \ref{spIC2} shows an example, where the average directions $\bar\vp_\pm$ within the groups change significantly. 

\begin{figure}[h!]
	\centering
	\begin{subfigure}{0.48\textwidth} 
		\includegraphics[width=\textwidth]{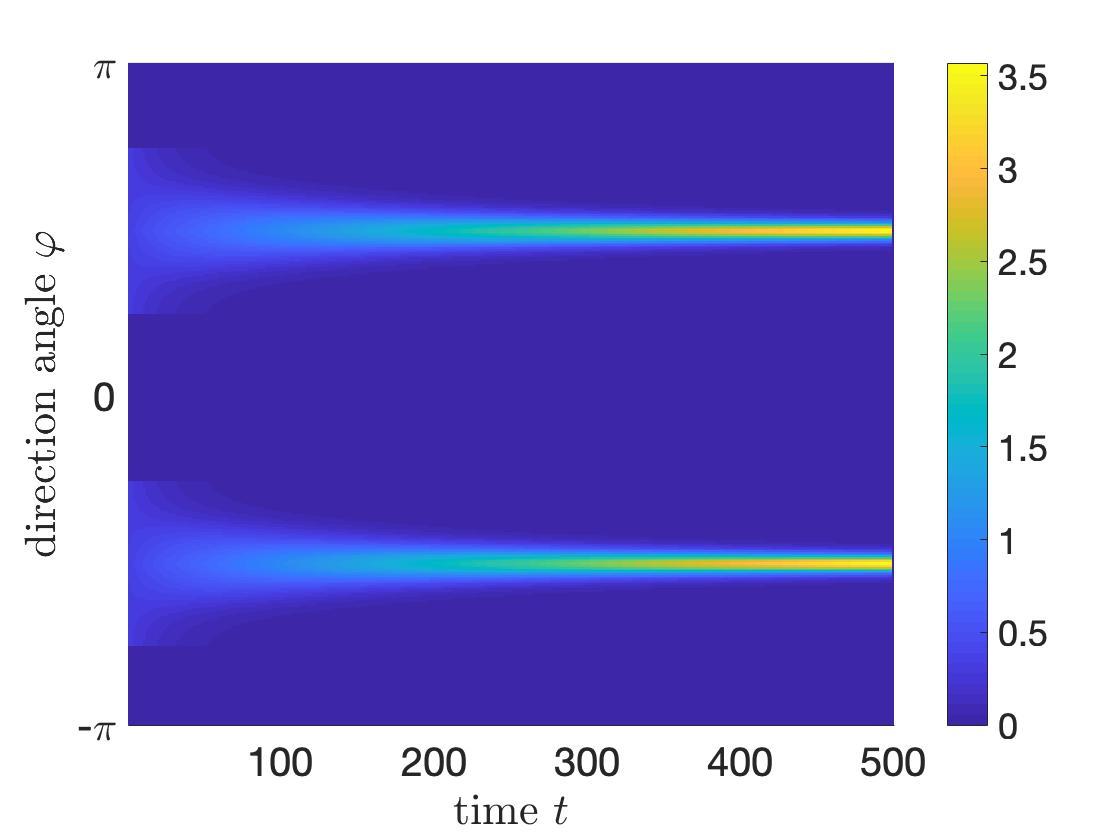}	
	\end{subfigure}
	\hspace{1em} 
	\begin{subfigure}{0.48\textwidth} 
		\includegraphics[width=\textwidth]{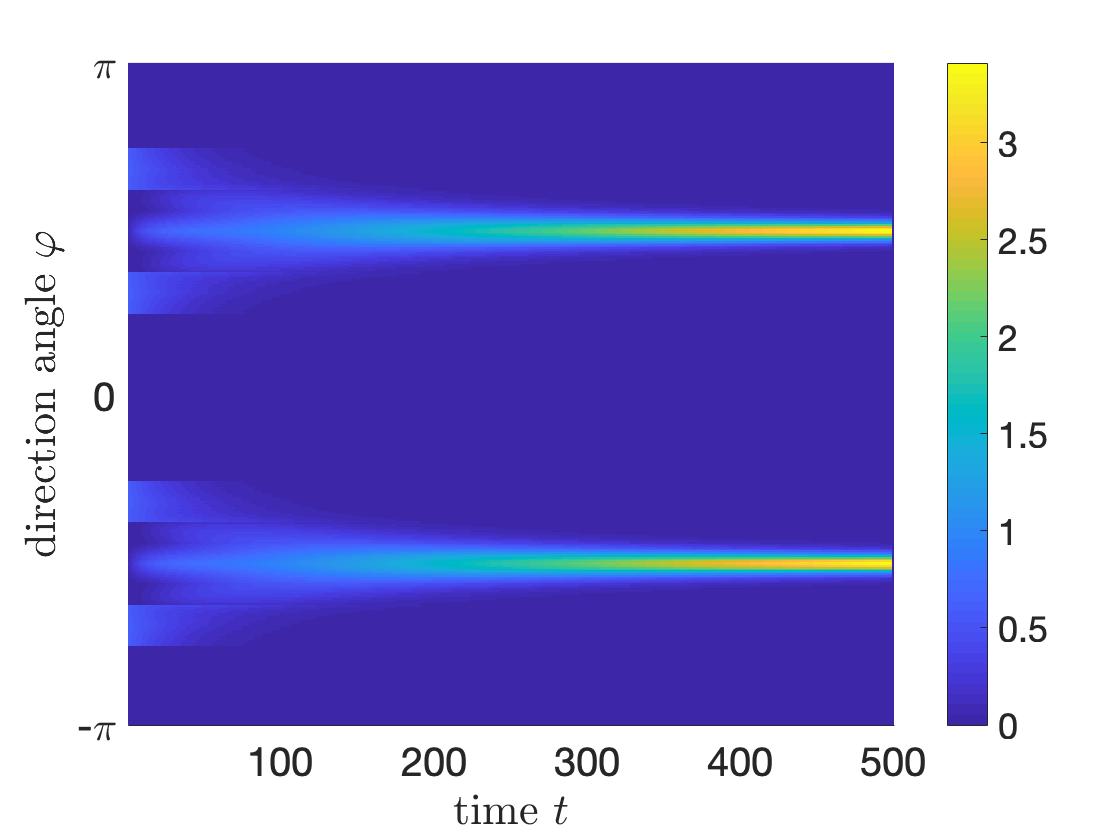}
	\end{subfigure}
	\\
	\begin{subfigure}{0.48\textwidth} 
		\includegraphics[width=\textwidth]{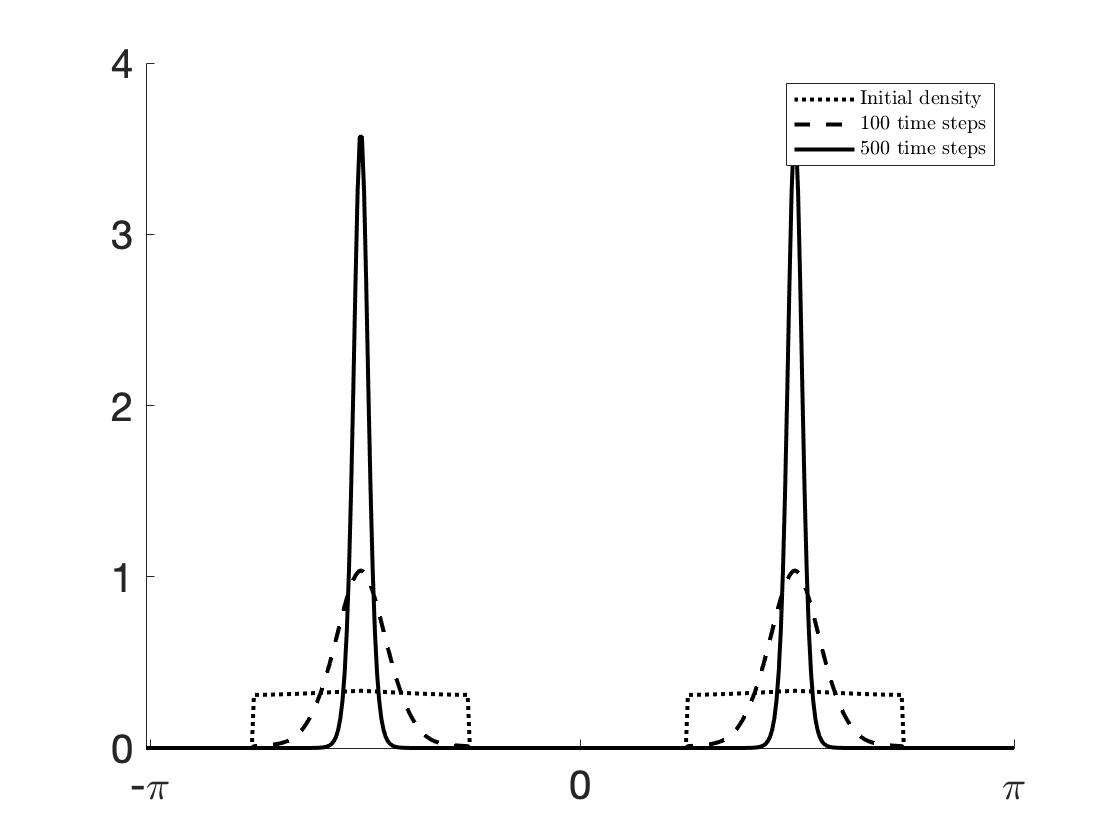}	
	\end{subfigure}
	\hspace{1em} 
	\begin{subfigure}{0.48\textwidth} 
		\includegraphics[width=\textwidth]{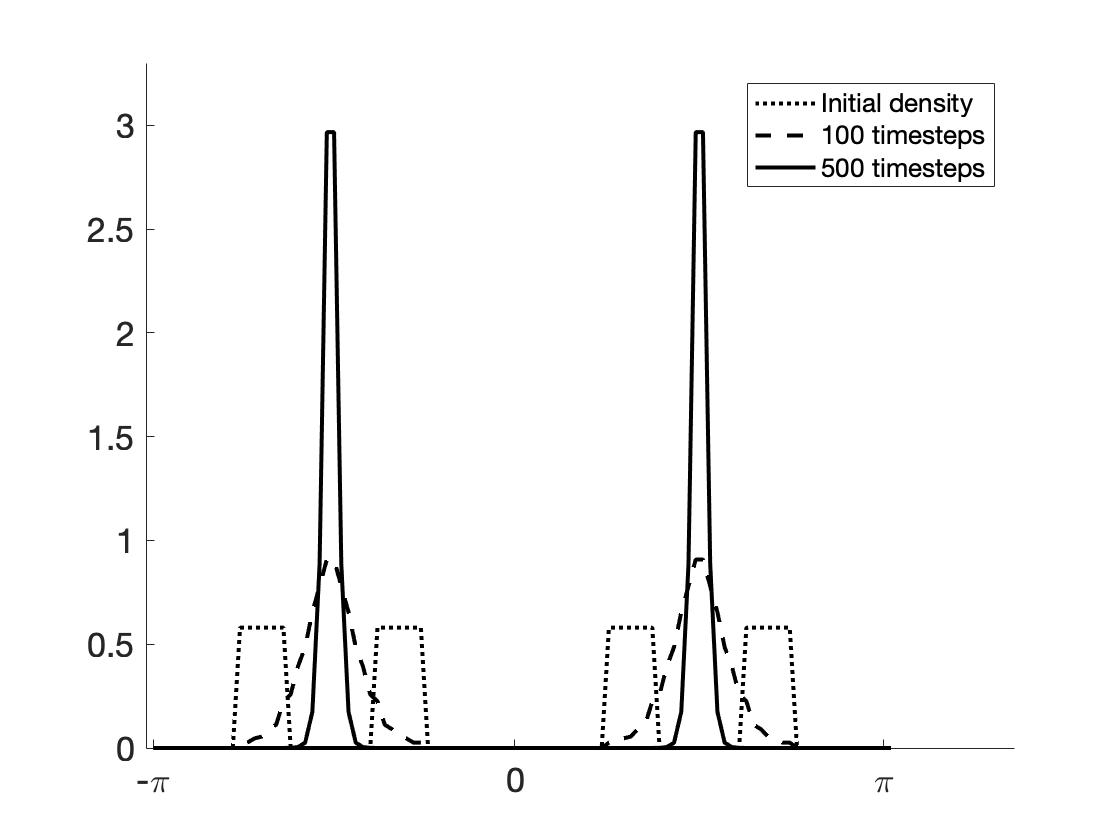}
	\end{subfigure}
	
	\caption{Two group initial conditions with the same mass in $\T_{+}$ and $\T_{-}$; rod shaped bacteria. \emph{Left:} uniform distributions within $\T_{+}$ and $\T_{-}$. \emph{Right:} vacuum around $\pm\pi/2$.}
	\label{spIC1}
\end{figure}

\begin{figure}[h!]
	\centering
	\begin{subfigure}{0.48\textwidth} 
		\includegraphics[width=\textwidth]{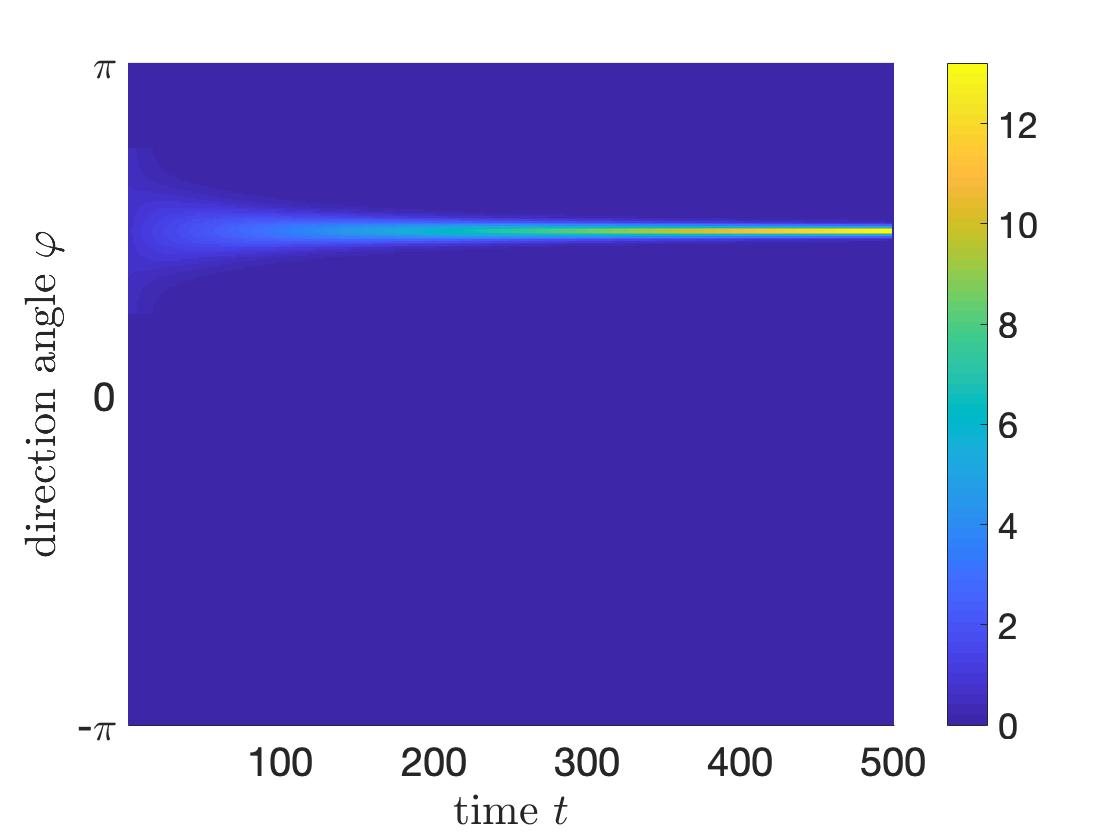}	
	\end{subfigure}
	\hspace{1em} 
	\begin{subfigure}{0.48\textwidth} 
		\includegraphics[width=\textwidth]{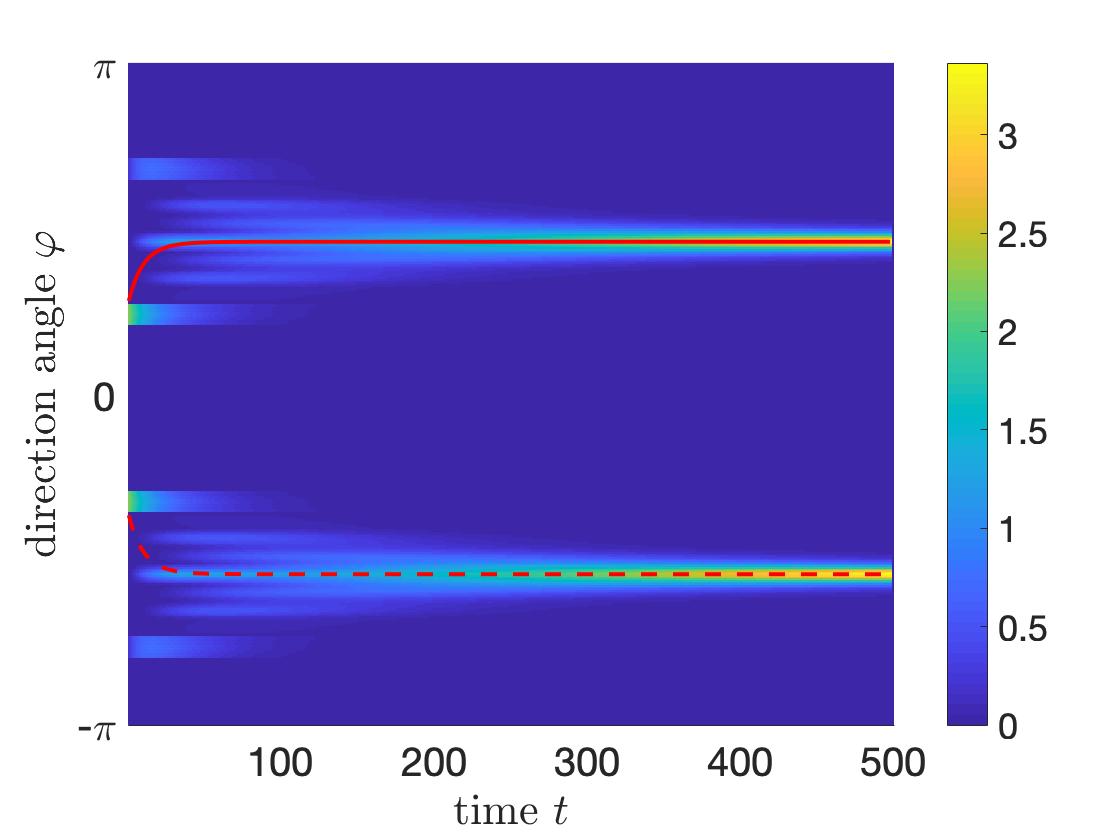}
	\end{subfigure}
	\\
	\begin{subfigure}{0.48\textwidth} 
		\includegraphics[width=\textwidth]{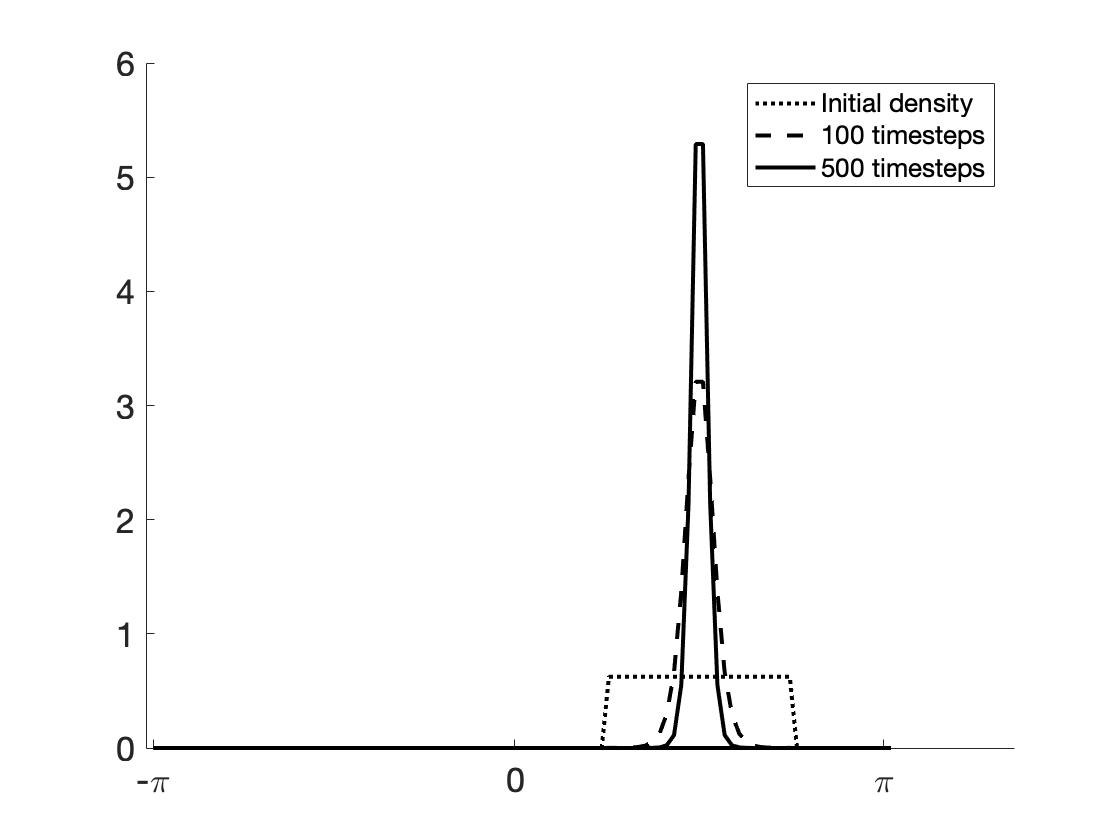}	
	\end{subfigure}
	\hspace{1em} 
	\begin{subfigure}{0.48\textwidth} 
		\includegraphics[width=\textwidth]{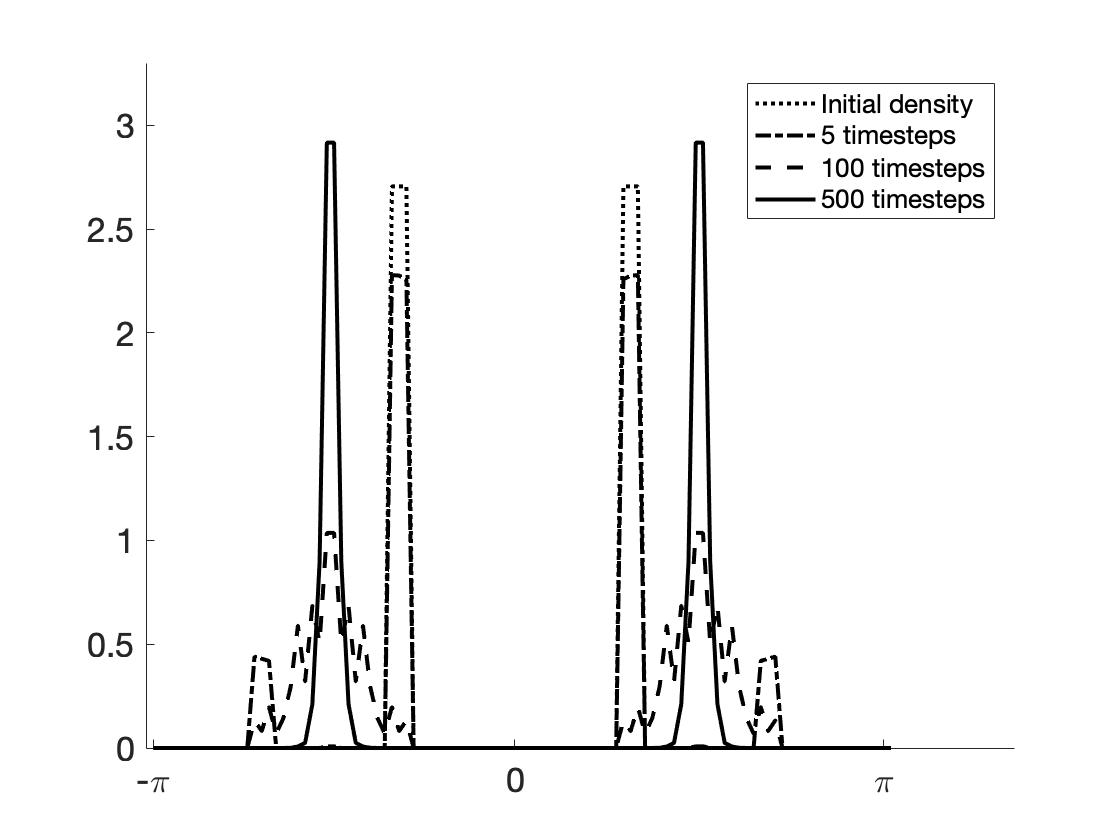}
	\end{subfigure}
	\caption{\emph{Left:} initial condition with uniform distribution in $\T_{+}$ and vacuum everywhere else. \emph{Right:}
		initially two concentrated patches at a distance somewhat bigger than $\pi/2$ (yellow at the left end). Outer stripes 
		created by reversal, then fill-in by alignment, followed by concentration towards opposite directions. The mean angles
		$\bar\vp_+$ (red line) and $\bar\vp_-$ (dotted red line) in the two groups change significantly.}
	\label{spIC2}
\end{figure}

\begin{figure}[h!]
	\centering
	\begin{subfigure}{0.48\textwidth} 
		\includegraphics[width=\textwidth]{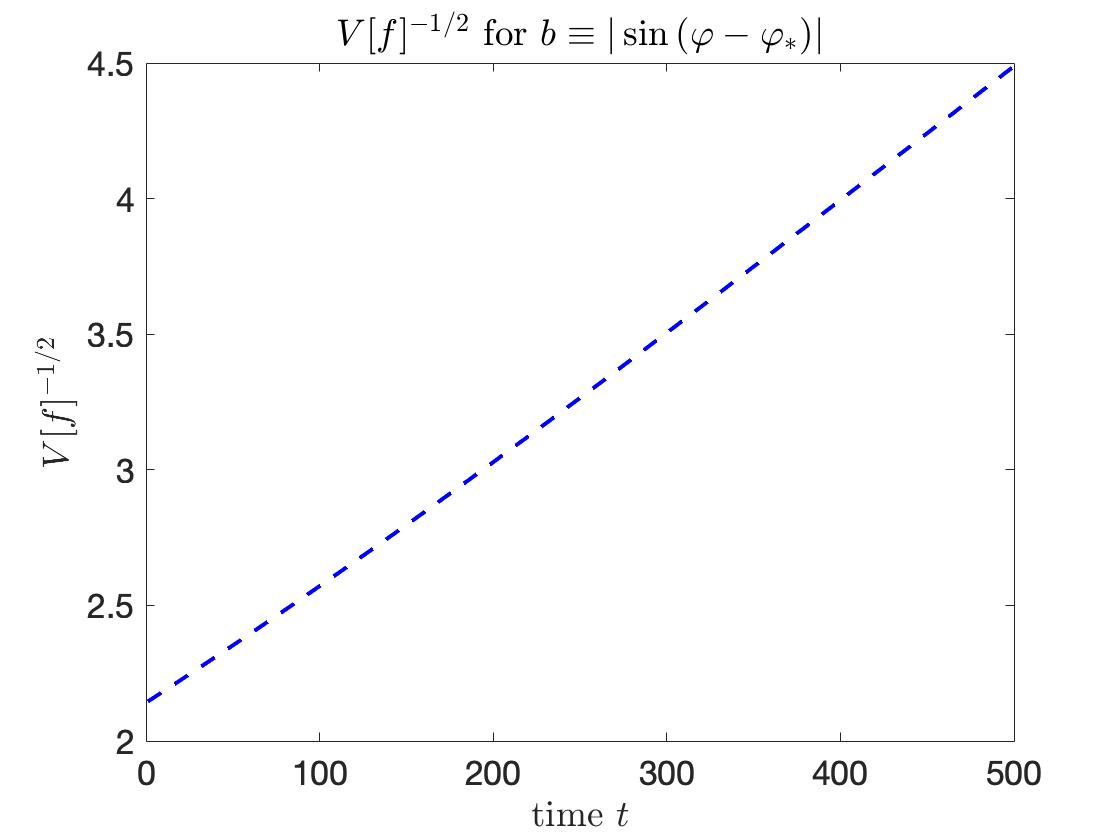}
	\end{subfigure}
	\hspace{1em}
	\begin{subfigure}{0.48\textwidth} 
		\includegraphics[width=\textwidth]{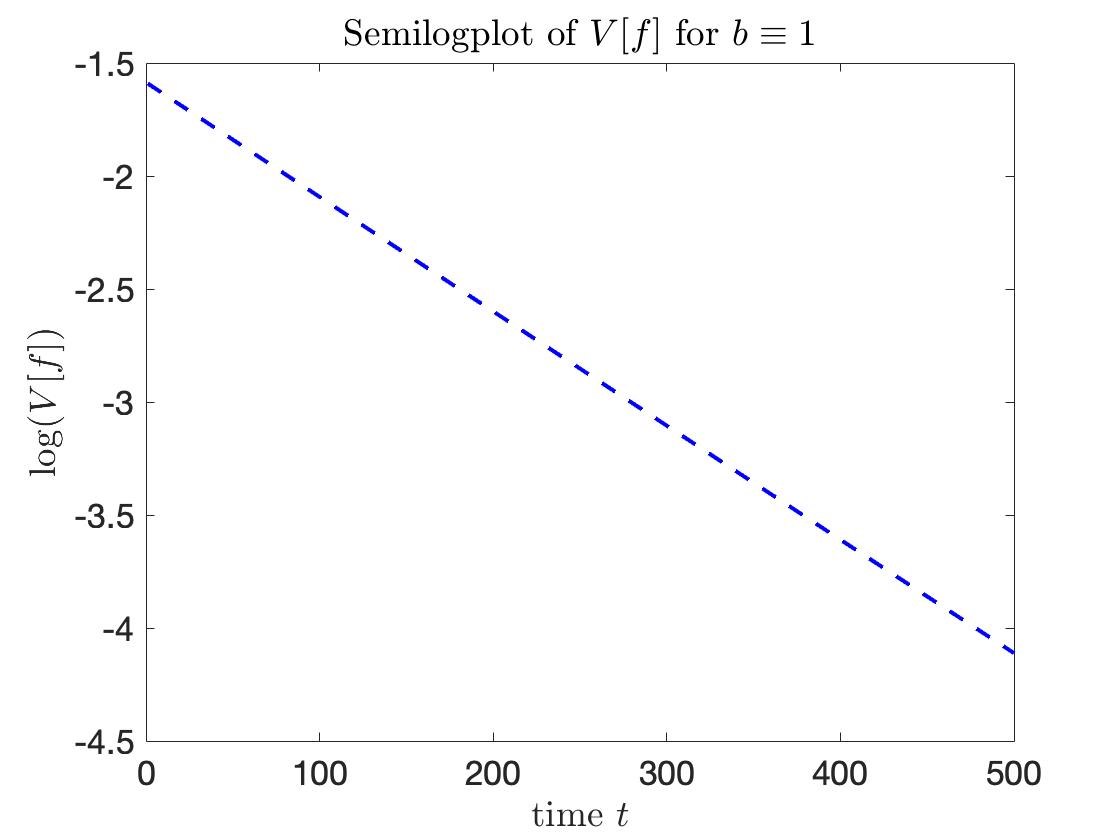}
	\end{subfigure}
	\caption{\emph{Left:} The evolution of the inverse square root of the variance $V[f]$ from the simulation depicted on the left side of Figure \ref{spIC2}, supporting the validity of Haff's law for rod shaped myxos. \emph{Right:} Semi-log plot of $V[f]$ for a simulation with the same initial data, 
	but for Maxwellian myxos, demonstrating exponential decay to equilibrium as shown in Lemma \ref{lem:AL} a).}
	\label{V}
\end{figure}


\paragraph{Instability of constant steady states:}
In Figure \ref{genIC} we consider small perturbations of a constant steady state. On the left side we start with a
random perturbation and see mass concentrating at unpredictable directions $\vp_+$ and $\vp_+^\downarrow$. On the right side we considered a perturbation at one random point $\hat\vp$. We see convergence to $f_\infty$, with equilibrium angle $\vp_+=\hat\vp$. Both simulations illustrate instability of the uniform distribution on $\T$.

\begin{figure}[h!]
	\centering
	\begin{subfigure}{0.45\textwidth} 
		\includegraphics[width=\textwidth]{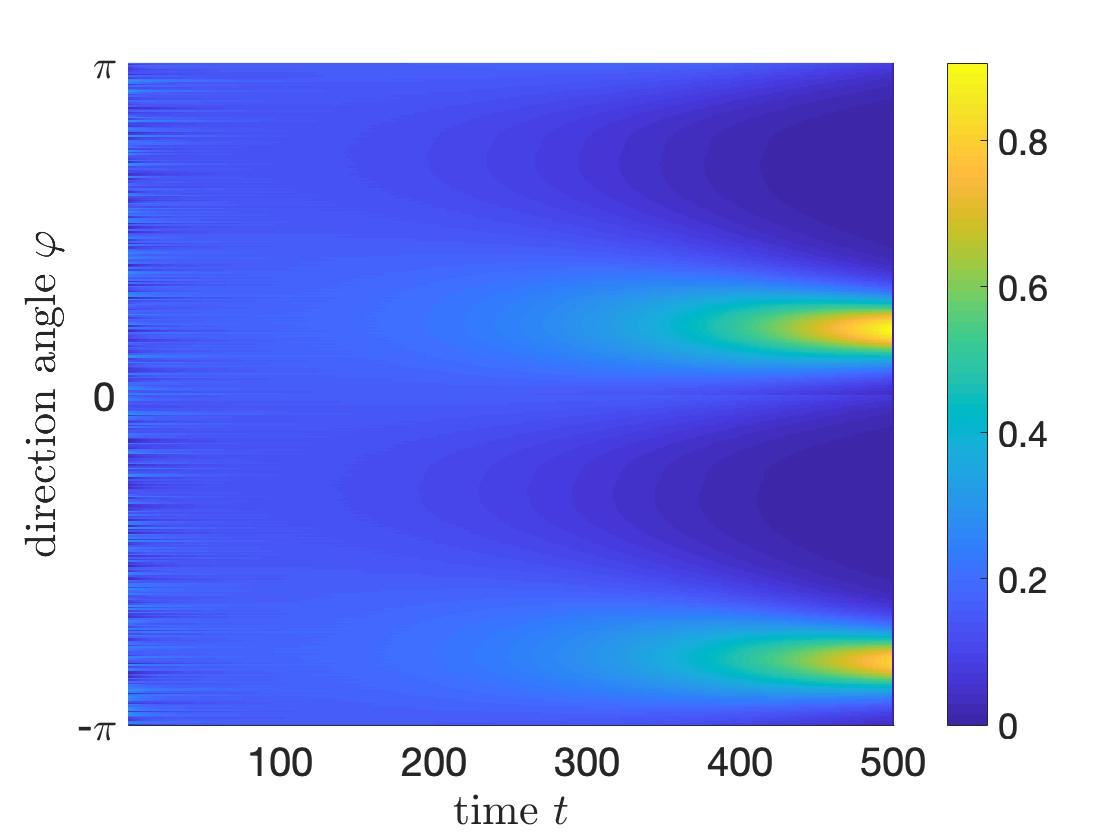}	
	\end{subfigure}
	\hspace{2em} 
	\begin{subfigure}{0.45\textwidth} 
		\includegraphics[width=\textwidth]{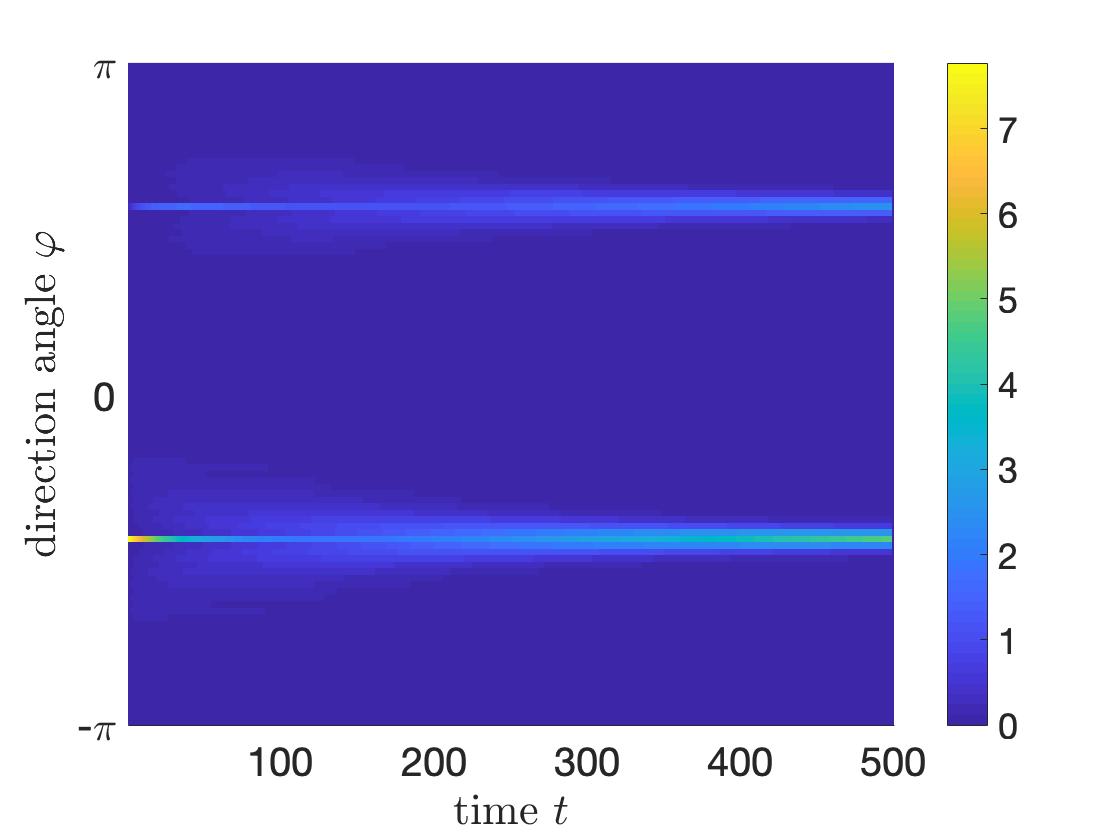}
	\end{subfigure}
	\\
	\begin{subfigure}{0.45\textwidth} 
		\includegraphics[width=\textwidth]{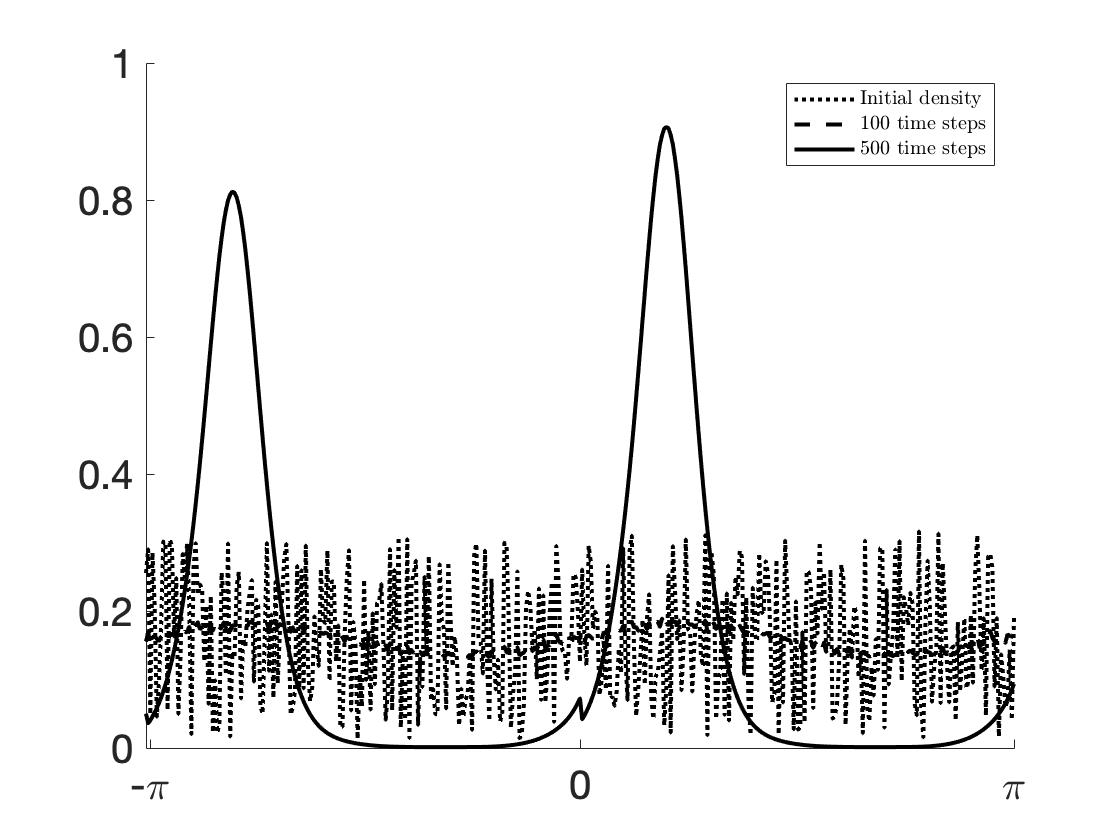}	
	\end{subfigure}
	\hspace{2em} 
	\begin{subfigure}{0.45\textwidth} 
		\includegraphics[width=\textwidth]{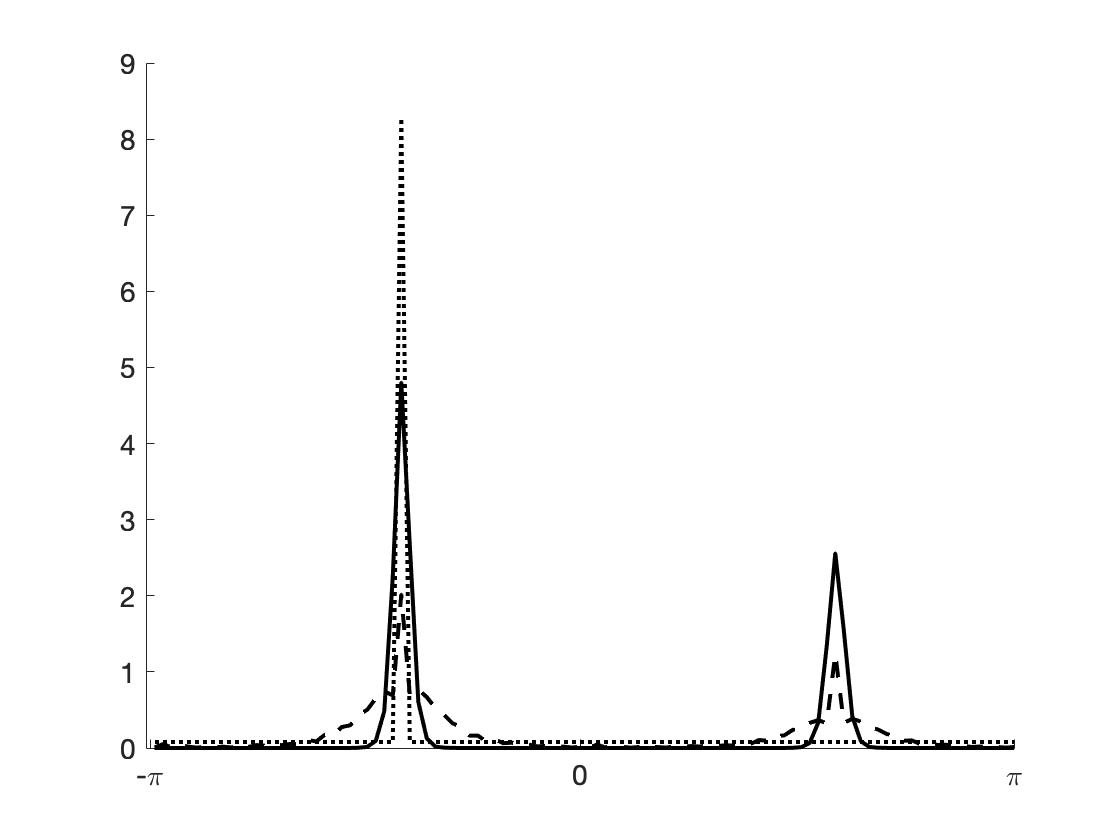}
	\end{subfigure}
	\caption{Instability of constant positive steady states. \emph{Left:} random initial perturbation, leading to an unpredictable
	equilibrium direction. \emph{Right:} initial perturbation at one direction, which eventually becomes the equilibrium 
	direction. Note that this differs from the simulations in Figure \ref{spIC2}, left, by the fact that a positive state is perturbed, and therefore 
	reversal collisions are active.}
	\label{genIC}
\end{figure}

\section{Formal macroscopic limit}

For the simulation of spatial pattern formation phenomena kinetic transport models pose significant numerical challenges and contain often unnecessary 
information on microscopic lengths and time scales. Therefore such simulations are often based on macroscopic models. For myxobacteria colonies
macroscopic models have been formulated both by a direct continuum approach \cite{igoshin3,igoshin,igoshin2,degond3} and based on microscopic or kinetic 
descriptions \cite{baskaran2,bertin,degond}. This section is concerned with the formal macroscopic limit of the kinetic model \eqref{myxo}
to demonstrate which features of other models are reproduced. Similarities can also be found with models for the interaction of microtubules by motor proteins
\cite{aranson} and for granular gases assuming nonelastic collisions \cite{bobylev1,jabin,toscani}. In the latter case the macroscopic limit is often combined
with the assumption of weakly inelastic collisions, leading to an energy balance equation describing the cooling of the gas \cite{bobylev1,toscani}. Since the 
model \eqref{myxo} corresponds to the other extreme of {\em sticky particles}, the macroscopic limit already involves the passage to zero temperature.

We investigate the behavior at macroscopic position and time scales by introducing the rescaling $x\to \frac{x}{\ve}$, 
$t\to\frac{t}{\ve}$, with a Knudsen number $\ve\ll 1$ in \eqref{myxo}:
$$
  \pa_t f^\ve + \omega \cdot \nabla_x f^\ve = \frac{1}{\ve}Q(f^\ve,f^\ve) \,.
$$
Formally, the convergence $f^\ve\to f$ as $\ve\to 0$ implies, by \eqref{Equ},
$$
   f(x,\vp,t) = \rho_+(x,t)\delta\bigl(\vp-\vp_+(x,t)\bigr) + \rho_-(x,t)\delta\left(\vp-\vp_+(x,t)^\downarrow\right) \,.
$$
In Section \ref{sec:3} we have seen that in general the collision operator only allows for two independent collision
invariants $\psi(\vp)=1$ and $\psi(\vp)=\vp$, providing only two conservation laws 
$$
   \pa_t \int_{\T} f \psi\,d\vp + \nabla_x\cdot \int_{\T} \omega f \psi\,d\vp = 0 \,,
$$
for the three unknowns $\rho_+$, $\rho_-$, and $\vp_+$. However, assuming two group initial data (see again Section 
\ref{sec:3}), the mass within the group is a third conserved quantity, closing the macroscopic limit system:
\begin{eqnarray*}
  && \pa_t \rho_+ + \nabla_x\cdot(\rho_+ \omega(\vp_+)) = 0 \,,\\
  && \pa_t \rho_- - \nabla_x\cdot(\rho_- \omega(\vp_+)) = 0 \,,\\
  && \pa_t ((\rho_+ + \rho_-)\vp_+) + \nabla_x\cdot( (\rho_+ -\rho_-)\vp_+ \omega(\vp_+)) = 0 \,.
\end{eqnarray*}
Expanding the derivatives, it can also be written as
\begin{eqnarray}\label{macro}
  && \pa_t \rho_+ + \omega\cdot\nabla_x\rho_+ + \rho_+ \omega^\bot\cdot\nabla_x \vp_+ = 0 \,, \notag\\
  && \pa_t \rho_- - \omega\cdot\nabla_x\rho_- - \rho_- \omega^\bot\cdot\nabla_x \vp_+ = 0 \,,\\
  && \pa_t \vp_+ + \frac{\rho_+ -\rho_-}{\rho_+ + \rho_-}\omega\cdot\nabla_x\vp_+ = 0 \,, \notag
\end{eqnarray}
showing that for $\rho_+,\rho_- > 0$ the system is strictly hyperbolic with characteristic velocities $\omega$,
$-\omega$, $\frac{\rho_+ -\rho_-}{\rho_+ + \rho_-}\omega$. Although the system is nonlinear, all three characteristic 
fields are linearly degenerate.  On the other hand, the special case $\rho_-=0$ leads to 
\begin{eqnarray*}
  && \pa_t \rho_+ + \omega\cdot\nabla_x\rho_+ + \rho_+ \omega^\bot\cdot\nabla_x \vp_+ = 0 \,,\\
  && \pa_t \vp_+ + \omega\cdot\nabla_x\vp_+ = 0 \,,
\end{eqnarray*}
a non-strictly hyperbolic system with the same structure as the equations for pressureless gas dynamics, derived as macroscopic limit of the dissipative Boltzmann equation \cite{jabin}. \\
Furthermore, comparing the equation for $\vp_+$ in (\ref{macro}) with the macroscopic one for the equilibrium angle in \cite{degond}, we see that they only differ by a pressure term proportional to $(\rho_+ -\rho_-) \omega^{\perp} \cdot \nabla_x \vp_+$ not occurring in our case. Considering the limit of vanishing diffusion in \cite{degond} this term vanishes, which reveals the fact that the two different microscopic models provide the same macroscopic equations. 
The models in \cite{baskaran2} and \cite{bertin} are quite different. They consider only one macroscopic density, coupled with a nematic polarization 
vector and an order parameter in \cite{baskaran2}, and with a mean velocity with variable speed in \cite{bertin}.

\newpage 
\section*{Acknowledgments}
This work has been supported by the Austrian Science Fund (FWF) project F65 \textit{Taming Complexity in Partial Differential Systems}. C.S. acknowledges support by the Austrian Science Fund (grant no. W1245), by the Fondation Sciences Math\'ematiques de Paris, and by Paris Science et Lettres. S.H.  acknowledges support via FWF project T-764. The authors also acknowledge the comments of an anonymous referee, who pointed out a significant 
number of references.

\end{document}